\documentclass[12pt]{article}
\usepackage{amsmath}
\usepackage{amssymb}
\usepackage{amsthm}
\usepackage{txfonts}
\usepackage{graphicx}
\usepackage{subfigure}
\usepackage{caption}
\usepackage{empheq}
\usepackage{algorithm}
\usepackage[noend]{algpseudocode}

\addtolength{\oddsidemargin}{-0.1 \textwidth}
\addtolength{\textwidth}{0.2 \textwidth}
\addtolength{\topmargin}{-0.1 \textheight}
\addtolength{\textheight}{0.2 \textheight}
\addtolength{\parindent}{-0.02   \textwidth}

\def\Ac{\mathcal{A}}
\def \E{\mathbb{E}}
\def \R{\mathbb{R}}
\def \M{\mathbb{M}}
\def \S{\mathbb{S}}
\def \W{\mathbb{W}}
\def \V{\mathbb{V}}
\def\Lc{{\cal L}}
\def\x{\times}
\def \Fb{\overline F}
\def\1{{\bf 1}}
\def \I{{\bf I}}
\def \N{\mathbb{N}}
\newcommand{\un}{1\hspace{-1mm}{\rm I}}   % 1

\def\subclassname{{\bfseries Mathematics Subject Classification
(2000)}\enspace}
\def\subclass#1{\par\addvspace\medskipamount{\rightskip=0pt plus1cm
\def\and{\ifhmode\unskip\nobreak\fi\ $\cdot$
}\noindent\subclassname\ignorespaces#1\par}}

\def\keywords#1{\par\addvspace\medskipamount{\rightskip=0pt plus1cm
\def\and{\ifhmode\unskip\nobreak\fi\ $\cdot$
}\noindent\keywordname\enspace\ignorespaces#1\par}}
\def\keywordname{{\bfseries Keywords}}

\newtheorem{Theorem}{Theorem}[section]
\newtheorem{proposition}[Theorem]{Proposition}
\newtheorem{remark}[Theorem]{Remark}
\newtheorem{lemma}[Theorem]{Lemma}
\newtheorem{assump}{Assumption}

\title{Monte Carlo for high-dimensional degenerated Semi Linear and Full Non Linear PDEs}

\author{Xavier Warin}

\begin{document}

\maketitle

\abstract{We extend a recently  developed method to solve semi-linear PDEs to the case of a degenerated diffusion. Being a pure Monte Carlo method it does not suffer from the so called curse of dimensionality and it can be used to solve problems that were out of reach so far. We give some results of convergence and show numerically that it is effective. 
Besides we numerically show that the new scheme developed can be used to solve some full non linear PDEs. At last we provide an effective algorithm to implement the scheme.
\keywords{Monte Carlo\and Non linear PDEs \and Nesting}
\subclass{MSC 65C05 \and MSC  49L25 }
}

\section{Introduction}
The resolution of non linear PDEs in high dimension is challenging due to the curse of dimensionality. Deterministic methods cannot compete in dimension above 4 and the most  used  approach in moderate dimension is the BSDE approach first proposed in \cite{pardoux1990adapted}  that led to the time resolution scheme proposed in \cite{bouchard2004discrete} and to an effective global resolution scheme based on regression   in \cite{gobet2005regression} and \cite{lemor2006rate}.
The full-non linear case, always based on regression, was treated in \cite{fahim2011probabilistic}, \cite{tan2013splitting} following the representation proposed in \cite{cheridito2007second}.\\
All these methods cannot be used in dimension above 6 or 7: the regression is achieved by projecting some functions  on a space of basis functions with a cardinality exploding with the dimension of the problem. It is important to understand that the first problem encountered in high dimension is not the computational time used but the memory required by the algorithm: regression in dimension $d=7$ or $d=8$ requires to store  millions of particles in memory and by taking only $4$ basis functions in each direction, it leads to a global number of basis functions equal to $4^d$ so exploding very quickly.\\
Recently some new methods have been developed to solve non linear PDEs:
\begin{itemize}
\item Deep learning techniques have been recently proposed to solve semi-linear PDEs \cite{han2017overcoming}, \cite{weinan2017deep} and the methodology has been extended to full non linear equations in \cite{beck2017machine}. This approach appears to be effective but no result of convergence is available so its limitations are unknown.
\item In \cite{warin2018nesting}, a new scheme based on nesting Monte Carlo is proposed to solve semi-linear equations in high dimension. The ingredients of this method are the randomization of the time step proposed in \cite{henry2016branching},\cite{doumbia2017unbiased} and the automatic differentiation method used in \cite{henry2016branching} and that was first  proposed in \cite{fournie1999applications}.
In the scheme proposed in \cite{warin2018nesting} a truncation  is achieved after a given number of switches corresponding to a given depth of the nesting method. The scheme proposed is numerically effective. However, it cannot deal with degenerated diffusions.
\item In \cite{weinan2017linear}, \cite{weinan2016multilevel}, \cite{hutzenthaler2017multi}, the authors develop an algorithm based on Picard iterations, multi-level techniques and automatic differentiation to solve some high dimensional PDEs with non linearity in $u$ and $Du$. They give some convergence results and a lot of numerical examples show its efficiency in high dimension. However, to our knowledge, this methodology cannot be used with a degenerated diffusion.
\end{itemize}
In this article, extending the work  in \cite{warin2018nesting}, we propose a scheme to solve the semi-linear case when the diffusion is degenerated, and  study the error associated to this scheme. Besides, we provide an effective algorithm to
implement the scheme and the most effective scheme proposed in \cite{warin2018nesting} to deal with a non linearity in $Du$. Some numerical results
confirm the interest of the methodology.\\
At last the scheme proposed here can be used to solve some full non-linear PDEs.
The convergence of the scheme is not proved but some numerical examples 
show its efficiency.\\
In the article, we take the following notations:
$\M^d$ is the set of $d \times d$ matrices. $\S^d$ the set of symmetric elements of $\M^d$. $\1_d=(1,\cdots,1)^{\top} \in\R^d$, $I_d$ is the unit diagonal matrix of $\M^d$.
For $(A,b) \in \M^d \times \M^d$, we note $A:B =  trace(A B^{\top})$. For $A \in \M^d$, $||A||_2  = \sqrt{ \displaystyle{\sum_{i=1}^d \sum_{j=1}^d }A_{i,j}^2}$.\\
For $u= (u_{i_1,.., i_q})_{i_p=1,..,d, p=1,..,q}$ where each element $u_{i_1,.., i_q}$ is a $\R$ value function of $C(\R^d)$, 
\begin{align*}
|u|_\infty = \sup_{i_p=1,..,d, p=1,..,q} \sup_{x \in \R^d } |u_{i_1,.., i_q}(x)|.
\end{align*}
All numerical experiments are achieved on a cluster using 16 nodes with a total of 448 cores and MPI is used for parallelization.  The generation of random numbers  in parallel mode is  achieved using Tina's Random Number Generator Library \cite{bauke2011tina}.
 All computational times are given for a configuration of Intel Xeon CPU E5-2680 v4  2.40GHz (Broadwell).

\section{The general problem}
Our goal is to solve the general full non linear equation
 \begin{flalign}
 \label{eqPDEFull}
  (-\partial_tu-\Lc u)(t,x)  & = f(t,x,u(t,x),Du(t,x),D^2u(t,x)), \nonumber \\
  u(T,x)&=g(x), \quad  t<T,~x\in\R^d,
 \end{flalign}
 with
\begin{flalign*}
 \Lc u(t,x) := \mu Du(t,x) +  \frac{1}{2} \sigma \sigma^{\top} \!:\! D^2 u(t,x)
 %\label{eq:gen*}
 \end{flalign*}
 so that $\Lc $ is the generator associated to 
 \begin{flalign*}
  X_t = x +  \mu t+ \sigma  dW_t,
 %\label{eq:sde}
 \end{flalign*}
 with $\mu \in \R^d$, and $\sigma \in \M^d$ is some constant matrix.\\
 In the whole article,  $\rho$ is the density of  a general random variable following a gamma law so  that  $\rho$  is bounded by below by a strictly positive value on any interval $[0,T]$:
 \begin{align}
 \rho(x)= \lambda^\alpha x^{\alpha-1} \frac{e^{-\lambda x}}{\Gamma(\alpha)},  1 \ge \alpha >0.
 \label{rho}
 \end{align}
 The associated  cumulated distribution function  is  $$F(x) =\frac{\gamma(\alpha,\lambda x)}{\Gamma(\alpha)}$$
 where $\gamma(s, x) =  \int_0^x t^{s-1} e^{-t} dt$ is the incomplete gamma function and $\Gamma(s)= \int_0^\infty t^{s-1} e^{-t} dt$ is the gamma function.\\
 The methodology  follows the ideas of \cite{warin2018nesting} and \cite{warin2017variations}.
 The case where $f$ only depends on $u$ and $Du$ and $\sigma$ is invertible has been treated in \cite{warin2018nesting} and it has been shown that using a Gamma law is $\alpha<1$ the  method was converging. Besides numerically it was shown that the use of an exponential law corresponding to the limit case $\alpha=1$ was optimal.\\
 
\section{The general scheme}
In this section we first present the general scheme used to solve the problem.
We then give the general algorithm used. We suppose here that $\sigma$ is non degenerated so that $\sigma^{-1}$ exists.\\
Let set $p \in \N^{+}$.
For $(N_0,.., N_{p-1}) \in \N^p$, we introduce  the sets of i-tuple,
$Q_i = \{ k=(k_1, ...,k_i)\}$ for $i \in \{1,..,p\}$ where all components $k_j \in [1, N_{j-1}]$.
Besides we define $Q^p= \cup_{i=1}^p Q_i$.\\
We construct the sets $Q^o_i$  for $i = 1, .., p$, such that
$$Q^o_1= Q_1$$  and 
the set $Q^o_i$  for $i>1$ are defined by recurrence : 
\begin{flalign*}
Q^o_{i+1} = \{ (k_1,..,k_i, k_{i+1}) / (k_1,..,k_i) \in Q^o_{i}, k_{i+1} \in \{ 1,..,N_{i+1}, 1_{1},.., (N_{i+1})_1, 1_2,...,(N_{i+1})_2 \} \}
\end{flalign*}
 so that to a particle noted $(k_1,.., k_i)  \in Q_i^o$ such that $k_i \in \N$, we associate two fictitious particles noted $ k^1 = (k_1,..,k_{i-1}, (k_i)_{1})$  and 
 $ k^2 = (k_1,..,k_{i-1}, (k_i)_{2})$. \\
To a particle $k=(k_1, .., k_i) \in Q^o_i$ we associate its original particle $o(k) \in Q_i$ such that $o(k)= (\hat k_1,.. \hat k_i)$ where $\hat k_j = l$ if $k_j = l$, $l_{1}$ or $l_2$.\\
For $k=(k_1, ...,k_i) \in Q^o_i$ we introduce the set of its non fictitious sons
\begin{align*}
\tilde Q(k) = \{ l =(k_1,..,k_i, m )/  m \in \{1,.., N_i\} \} \subset Q_{i+1}^o,
\end{align*}
and the set of all sons
\begin{align*}
\hat Q(k) = \{ l =(k_1,..,k_i, m )/  m \in \{1,.., N_i, 1_1,..., (N_i)_1,1_2,...,(N_i)_2 \} \} \subset Q_{i+1}^o.
\end{align*}
By convention $\tilde Q(\emptyset) = \{ l =( m )/  m \in \{1,.., N_0\} \} =  Q_{1}.$
 Reciprocally the ancestor $k$ of a particle  $\tilde k$ in $\tilde Q(k)$ is noted $\tilde k^{-}$.\\
We define the order  of  a particle $k  \in Q^o_{i}$, $i \ge 0$,  by the function $\kappa$:
\begin{align*}
\kappa(k) =& 0 \mbox{ for } k_i \in \N, \\
\kappa(k) =  &  1  \mbox{ for } k_i = l_1,  l \in \N \\
\kappa(k) =  &  2 \mbox{ for } k_i = l_2 , l \in \N 
\end{align*}
We define the  sequence $\tau_{k}$  of switching increments that are i.i.d. random variables with density $\rho$ 
 for $k\in Q^p$.
 The switching dates are defined as :
 \begin{equation}
 \left\{
 \begin{array}{lll}
 T_{(j)} &  =&  \tau_{(j)} \wedge T,  j \in \{ 1,., N_{0}\} \\
 T_{ \tilde k} & =&  ( T_{k} + \tau_{ \tilde k}) \wedge T,  k =( k_1,..k_i) \in Q_i, \tilde k \in \tilde Q(k)
 \end{array}
 \right.
 \end{equation}
By convention $T_k = T_{o(k)}$ and $ \tau_k = \tau_{o(k)}$.
For $k = (k_1,..,k_i) \in Q^o_i$  and $\tilde k =(k_1,..,k_i, k_{i+1})  \in \hat Q(k)$ we define the following trajectories :
	\begin{flalign}\label{eq:brownRenorm}
		W^{\tilde k}_s
		~:=~&
		W^{k}_{T_{k}}
		~+~
		\1_{ \kappa( \tilde k) =0}
		\bar W^{o(\tilde k)}_{s - T_{k}} 
        ~-~
		\1_{ \kappa(\tilde k)=1}
		\bar W^{o(\tilde k)}_{s - T_{k}},
		~~~\mbox{and}~~\\
		X^{\tilde k}_s := &x +\mu s +\sigma  W^{\tilde k}_s,
		~~~\forall s \in [T_{k}, T_{\tilde k}],
	\end{flalign}
where the $\bar W^{ k}$ for $k$ in $Q^p$  are independent $d$-dimensional Brownian motions, independent of the $(\tau_{k})_{k \in Q^p}$.\\
In order to understand what these different trajectories represent, suppose that  $d=1$, $\mu= 0$, $\sigma =1$ and let us consider the original particle $k= (1,1,1)$  such that $T_{(1,1,1)}=T$.\\
Following equation \eqref{eq:brownRenorm},
\begin{align*}
X^{(1,1,1)}_T = & \bar W^{(1)}_{T_{(1)}} + \bar W^{(1,1)}_{T_{(1,1)}-T_{(1)}} + \bar W^{(1,1,1)}_{T-T_{(1,1)}} \\
X^{(1_{1},1,1)}_T= & -\bar W^{(1)}_{T_{(1)}} + \bar W^{(1,1)}_{T_{(1,1)}-T_{(1)}} + \bar W^{(1,1,1)}_{T-T_{(1,1)}} \\
X^{(1,1_{1},1)} = & \bar W^{(1)}_{T_{(1)}} - \bar W^{(1,1)}_{T_{(1,1)}-T_{(1)}} + \bar W^{(1,1,1)}_{T-T_{(1,1)}} \\
X^{(1_{2},1_{1},1)}_T = & - \bar W^{(1,1)}_{T_{(1,1)}-T_{(1)}} + \bar W^{(1,1,1)}_{T-T_{(1,1)}} \\
...&
\end{align*}
such that all particles are generated from the $\bar W^k$ used to define $X^{(1,1,1)}_T$.\\
Using the previous definitions,
we consider the estimator defined by:
\begin{empheq}[left=\empheqlbrace] {align}
\bar u_\emptyset^p = & \frac{1}{N_0} \sum_{j=1}^{N_0}   \phi\big( 0, T_{(j)}, X^{(j)}_{T_{(j)}}, \bar u_{(j)}^p, D \bar u_{(j)}^p, D^2 \bar u_{(j)}^p\big) , \nonumber\\
\bar u_{k}^p = &  \frac{1}{N_i} \sum_{\tilde k \in \tilde Q(k)}   \frac{1}{2} \big(  \phi\big(T_k,T_{\tilde k},X^{\tilde k}_{T_{\tilde k}},\bar u_{\tilde k}^p, D\bar u_{\tilde k}^p, D^2\bar u_{\tilde k}^p\big) + \nonumber \\
& \quad \quad \phi\big(T_k,T_{\tilde k},  X^{\tilde k^{1}}_{T_{\tilde k}}, \bar u_{\tilde k^{1}}^p D\bar u_{\tilde k^{1}}^p, D^2\bar u_{\tilde k^{1}}^p\big)  \big), \quad \mbox{ for }  k =( k_1,...,k_i) \in Q^o_i, 0 < i <p, \nonumber \\
D \bar u_{k}^p = &  \frac{1}{N_i} \sum_{\tilde k \in \tilde Q(k)}   \V^{\tilde k} \frac{1}{2} \big(  \phi\big(T_k,T_{\tilde k},X^{\tilde k}_{T_{\tilde k}},\bar u_{\tilde k}^p, D\bar u_{\tilde k}^p, D^2\bar u_{\tilde k}^p\big) -\nonumber \\
& \quad \quad  \phi\big(T_k,T_{\tilde k},  X^{\tilde k^{1}}_{T_{\tilde k}}, \bar u_{\tilde k^{1}}^p D\bar u_{\tilde k^{1}}^p, D^2\bar u_{\tilde k^{1}}^p\big)  \big), \quad \mbox{ for }  k =( k_1,...,k_i) \in Q^o_i, 0 < i < p, \nonumber \\
D^2\bar u_{k}^p = &  \frac{1}{N_i} \sum_{\tilde k \in \tilde Q(k)}   \W^{\tilde k} \frac{1}{2} \big(  \phi\big(T_k,T_{\tilde k },X^{\tilde k}_{T_{\tilde k}}, \bar u_{\tilde k}^p, D\bar u_{\tilde k}^p, D^2\bar u_{\tilde k}^p\big) + \nonumber \\
& \quad \quad \phi\big(T_k,T_{\tilde k},  X^{\tilde k^{1}}_{T_{\tilde k}}, \bar u_{\tilde k^{1}}^p, D\bar u_{\tilde k^{1}}^p, D^2\bar u_{\tilde k^{1}}^p\big) - \nonumber \\
& \quad \quad 2 \phi\big(T_k,T_{\tilde k},  X^{\tilde k^{2}}_{T_{\tilde k}}, \bar u_{\tilde k^{2}}^p, D\bar u_{\tilde k^{2}}^p, D^2\bar u_{\tilde k^{2}}^p\big) \big), \quad \mbox{ for }  k =( k_1,...,k_i) \in Q^o_i, 0 < i <p, \nonumber \\
\bar u_{k}^p = &  g(X^{k}_{T_{k}}), \quad  \mbox{for } k \in Q_p^o, \nonumber \\
D\bar u_{k}^p = & D g(X^{ k}_{T_{k}}), \quad  \mbox{for } k \in Q_p^o, \nonumber \\
D^2\bar u_{k}^p = & D^2 g(X^{k}_{T_{ k}}), \quad  \mbox{for }  k \in Q_p^o,
\label{eq:estimFull1}
\end{empheq} 
 where $\phi $ is defined by  :
 \begin{flalign}
 \label{defPhi}
  \phi(s, t,x,y,z,\theta) &:= \frac{\1_{\{t\ge T\}}}{ \Fb(T-s)} g(x)\!+\! \frac{\1_{\{t<T\}}}{\rho(t -s)} f(t,x,y,z,\theta),
 \end{flalign}
 and 
 \begin{align*}
 \V^{k} = \sigma^{-\top} \frac{\bar W_{ T_{k}- T_{k^{-}}}^{k}}{T_{ k}- T_{ k^{-}}},
 \end{align*},
 \begin{align}
 \W^{k} = 
  (\sigma^{\top})^{-1} \frac{\bar W^{k}_{T_{k}- T_{k^{-}}}(\bar W^{ k}_{T_{ k}- T_{k^{-}}})^{\top} - (T_{ k}- T_{k^{-}})  I_d}{(T_{ k}- T_{k^{-}})^2} \sigma^{-1}.
 \end{align}
 As explained before, the $u$ and $Du$ term in $f$ are treated as explained in \cite{warin2018nesting} and only the $D^2u$ treatment is the novelty of this scheme.
 \begin{remark}
 In practice, we just have the $g$ value at the terminal date $T$ and we want to apply the scheme even if the derivatives of the final solution is not given.
 We can close the system for $k$ in $Q_p^o$  replacing $\phi$ by $g$ and taking some value for $N_{p+1}$:
 \begin{align*}
 \bar u_{k}^p = &  \frac{1}{N_{p}} \sum_{\tilde k \in \tilde Q(k)}   \frac{1}{2} \big(  g\big(X^{\tilde k}_{T_{\tilde k}}\big) + g \big( X^{\tilde k^{1}}_{T_{\tilde k}}\big)  \big) , \\
D\bar u_{ k}^p = & \frac{1}{N_{p}} \sum_{\tilde k \in \tilde Q(k)}  \V^{\tilde k} \frac{1}{2} \big(  g\big(X^{\tilde k}_{T_{\tilde k}}\big) -g\big(  X^{\tilde k^{1}}_{T_{\tilde k}}\big) \big), \\
D^2\bar u_{k}^p = &  \frac{1}{N_{p}} \sum_{\tilde k \in \tilde Q(k)} 
 \W^{\tilde k}  \frac{1}{2} \big(  g \big(X^{\tilde k}_{T_{\tilde k}}\big) +
g \big( X^{\tilde k^{1}}_{T_{\tilde k}}\big) -  2 g\big( X^{\tilde k^{2}}_{T_{\tilde k}}\big) \big).
 \end{align*}
 In all our numerical examples, we use this approximation.
 \end{remark}
 \begin{remark}
 In the case where the coefficient are not constant, some Euler scheme can be added as explained in \cite{warin2018nesting}.
 \end{remark}
 An effective algorithm for this scheme  is given these two functions:
 \begin{algorithm}[H]
\caption{\label{algoMC1} Outer Monte Carlo algorithm ($V$ generates unit Gaussian RV, $\tilde V$ generates RV with gamma law density)}
\begin{algorithmic}[1]
\Procedure{PDEEval}{$\mu$, $\sigma$, $g$, $f$, $T$, $p$, $x_0$, $\{N_0,.., N_{p+1}\}$, $V$, $\tilde V$} 
\State $u_M = 0$ 
\State $x(0,:)= x_0(:)$ \Comment{$x$ is a matrix of size $1 \times n$}
\For{$i = 1, N_0$}
\State   $(u,Du,D^2u)=$ EvalUDUD2U$(x_0,\mu, \sigma,g, T,\{N_0,.., N_{p+1}\}, V,\tilde V,p,1, 0,0)$ 
\State $u_M = u_M + u(0)$
\EndFor
\Return $\frac{U_M}{N_0}$
\EndProcedure
\end{algorithmic}
\end{algorithm}
\begin{algorithm}[H]
\caption{\label{algoMC2} Inner Monte Carlo algorithm where $t$ is the current time, $x$ the array of particles positions of size $m \times d$, and $l$ the nesting level.}
\begin{algorithmic}[1]
\Procedure{EvalUDUD2U}{$x,\mu, \sigma,g, T,\{N_0,.., N_{p+1}\},V,\tilde V,p,m,t,l$} 
\State $\tau = \min ( \tilde V(),T-t)$,  \Comment{Sample the time step}
\State $G=V()$ \Comment{Sample the $n$ dimensional Gaussian vector}
\State  $xS(1:m,:) =  x(:)+ \mu \tau + \sigma G \sqrt{\tau}$
\State  $xS(m+1:2m,:)=   x(:)+ \mu \tau $
\State  $xS(2m+1:3m,:)=   x(:)+ \mu \tau -\sigma G \sqrt{\tau}$
\State $tS =t + \tau$ \Comment{New date}
\If {$ ts \ge T $ or $l = p$}
\State $g_1 = g(xS(1:m,:)) ; g_2 = (xS(m+1:2m,:)) ; g_3 = g(xS(2m+1:3m,:)) $ 
\State $ u(:)= \frac{1}{2} (g_1+g_3)$
\State $ Du(:,:)=  \frac{1}{2} (g_1-g_3) \quad \sigma^{- \top} G$
\State $D^2u(:,:,:)=  \frac{1}{2} (g_1+g_3-2g_2) \sigma^{- \top} \frac{GG^{\top}- \I_d}{\tau} \sigma^{-1}$
\If { $l \ne p$}
\State $(u(:),Du(:,:), D^2u(:,:,:)) /= \frac{1}{\bar F(\tau)}$ 
\EndIf
\Else
\State $ y(:)= 0; z(:,:) = 0; \theta(:,:,:) =0$
\For {$j=1, N_{l+1}$}
\State $(y,z, \theta)+=$EvalUDUD2U$(xS,\mu,\sigma,g,T,\{N_0,.., N_{p+1}\},V,\tilde V,p,3m,tS,l+1)$
\EndFor
\State $(y,z,\theta) /= N_{l+1}$
\For {$q= 1, m$}
\State $f_1 = f(ts,xS(q),y(q),z(q,:),\theta(q,:,:))$
\State $f_2 = f(ts,xS(m+q),y(m+q),z(m+q,:),\theta(m+q,:,:))$
\State $f_3 = f(ts,xS(2m+q),y(2m+q),z(2m+q,:),\theta(2m+q,:,:))$
\State $u(i)= \frac{1}{2}(f_1+f_3)$
\State $Du(i,:)= \frac{1}{2}(f_1-f_3) \sigma^{- \top} G $
\State  $D^2u(i,:,:)= \frac{1}{2}(f_1+f_3-2f_2) \sigma^{- \top} \frac{GG^{\top}- \I_d}{\tau} \sigma^{-1} $
\EndFor
\EndIf
\Return $(u,Du,D^2u)$
\EndProcedure
\end{algorithmic}
\end{algorithm}

 \section{The linear case}
 In this section  we suppose that $f$ is linear such that 
 \begin{align}
 f(\gamma)= A:\gamma , \mbox{ for } \gamma \in \S^d, A \in \M^d.
 \label{flin}
 \end{align}
 For an index $k =(k_1,.., k_i) \in Q_i^0$ we introduce 
 \begin{align}
 \label{eq::Diesek}
 \#k = \sum_{j=1}^i 1_{k_j = l_2, l \in \N},
 \end{align}
 and for $k \in Q_i$, the set of particles generated from an original particle $k$ by:
 \begin{align*}
 R(k) = \{  \bar k \in Q^o_i, o(\bar k) = k\}
 \end{align*}
 We make the following assumptions:
 \begin{assump}
 \label{ass::uProp}
 Equation  \eqref{eqPDEFull} has a solution $u$ such that
 \begin{itemize}
 \item $u \in C^{1,2 p}([0,T] \x \R^d)$ with uniformly bounded derivatives in $x$ and $t$.
 \item $D^{2i}u$ is $\theta$-H\"older with $\theta \in (0,1]$ in time with constant $\hat K$ for $i=1$ to $p$:
 \begin{align}
 | D^{2i}u(t,.) - D^{2i} u(\tilde t, .) |_\infty \le \hat K | t- \tilde{t}|^\theta \quad \quad \quad \forall (t,\tilde{t}) \in [0,T] \times [0,T].
 \label{eq:hol}
 \end{align}
 \end{itemize}
 \end{assump}
 
For $k= (k_1,..,k_i) \in Q_i$, $i \ge 1$, $u \in C^{2p}([0,T] \x \R^d)$ we introduce 
\begin{align}
\hat u^k = \frac{1}{2^{i-1}} \displaystyle{\sum_{ \tilde k \in R(k)}} u(T_k, X_{T_k}^{\tilde k}) (-2)^{\# \tilde k},
\end{align}
so that  for example :
\begin{itemize}
\item for  $k =(l) \in Q_1$,
$\hat u^k = u(T_{(l)}, X^{(l)}_{T_{(l)}})$,
\item for $k \in Q_2$,
\begin{align*}
\hat u^k =&  \frac{1}{2}(u(T_{k}, X^{k}_{T_{k}}) +
u(T_{k}, X^{k^1}_{T_{k}}) -2  u( T_{k}, X^{k^2}_{T_{k}})),
\end{align*}
\item
or $k =(l_1,l_2, l_3) \in Q_3$,
\begin{align*}
\widehat{u}^k =&  \frac{1}{4} \big( u(T_{k}, X^{(l_1,l_2,l_3)}_{T_{k}}) +
u(T_{k}, X^{(l_1,l_2,(l_3)_1)}_{T_{k}}) - 2 u(T_{k}, X^{(l_1,l_2,(l_3)_2)}_{T_{k}}) + \\
& u(T_{k}, X^{(l_1,(l_2)_1,l_3)}_{T_{k}}) +
u(T_{k}, X^{(l_1,(l_2)_1,(l_3)_1)}_{T_{k}}) - 2 u(T_{k}, X^{(l_1,(l_2)_1,(l_3)_2)}_{T_{k}}) -\\
& 2 u(T_{k}, X^{(l_1,(l_2)_2,l_3)}_{T_{k}}) - 2
u(T_{k}, X^{(l_1,(l_2)_2,(l_3)_1)}_{T_{k}}) + 4 u(T_{k}, X^{(l_1,(l_2)_2,(l_3)_2)}_{T_{k}}) \big).
\end{align*}
\end{itemize}
At last for $k = (k_1,..,k_i) \in Q_i$, $i > 1$ we introduce the 
set of all ancestors of $k$ plus $k$ and except the particle at the first level :
\begin{align*}
\hat An(k)= \{  (k_1,k_2) , ..., (k_1,..,k_{i}) \}
\end{align*}
We need a lemma to prepare the  result.
\begin{lemma}
\label{lemma1}
Suppose that $u \in C^{1,2 p-2}([0,T] \x \R^d)$, with uniformly bounded derivatives in $x$ and $t$ then there exists a positive constant $C(\sigma)$  such that for all $k \in Q_i$, $ 2 \le i \le p$, any interval $I$ of $\R$
\begin{align*}
\E \big( 1_{T_{ k} \in I} \quad  (\widehat{u}^k)^2  \displaystyle{\prod_{ \tilde k  \in \hat An(k)}} ||\W^{\tilde k}||^2_2 \big) \le  C(\sigma)^{i-1}  \sup_{t\in[0,T]}| D^{2(i-1)}u(t,.)|_\infty^2\E \big( 1_{T_{\tilde k} \in I} \big).
\end{align*}
\end{lemma}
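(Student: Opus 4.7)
The plan is to recognize $\hat u^k$ as an iterated symmetric second-order finite difference of $u$, to bound it pointwise by $|D^{2(i-1)}u|_\infty$ times a product of squared Brownian increments, and then to absorb the singular Malliavin weights $\W^{\tilde k}$ by conditioning on the switching times and using Gaussian scaling. The structural identity is the key step. For $\bar k \in R(k)$, the definition of the fictitious particles yields
\[
X^{\bar k}_{T_k}=x+\mu T_k+\sigma\bar W_1+\sum_{j=2}^{i}\varepsilon_j\,\sigma\bar W_j, \qquad \bar W_j:=\bar W^{(k_1,\dots,k_j)}_{T_{(k_1,\dots,k_j)}-T_{(k_1,\dots,k_{j-1})}},
\]
with $\varepsilon_j\in\{1,-1,0\}$ according to whether $\bar k_j=k_j,(k_j)_1$ or $(k_j)_2$, and the weight $(-2)^{\#\bar k}$ factors as $\prod_{j=2}^{i}w(\varepsilon_j)$ with $w(\pm1)=1$, $w(0)=-2$. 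Since $\sum_{\varepsilon}w(\varepsilon)f(y+\varepsilon s)=\delta_s f(y):=f(y+s)+f(y-s)-2f(y)$, summing out level by level gives
\[
\hat u^k=\frac{1}{2^{i-1}}\bigl(\delta_{s_2}\circ\cdots\circ\delta_{s_i}\bigr)u\bigl(T_k,\,x+\mu T_k+\sigma\bar W_1\bigr),\qquad s_j=\sigma\bar W_j.
\]

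Iterating Taylor's formula with integral remainder,
\[
\delta_s f(y)=\int_0^1(1-t)\bigl[D^2 f(y+ts)+D^2 f(y-ts)\bigr](s,s)\,dt,
\]
a total of $i-1$ times (permissible since $i\le p$ and $u\in C^{1,2p-2}$ has bounded derivatives) expresses $\hat u^k$ as $2^{-(i-1)}$ times a sum of $2^{i-1}$ integrals of $D^{2(i-1)}u$ applied to $(s_2,s_2,\dots,s_i,s_i)$ at shifted arguments. The iterated factor $\bigl(\int_0^1(1-t)\,dt\bigr)^{i-1}=2^{-(i-1)}$ exactly balances the $2^{i-1}$ summands, and each $|D^{2(i-1)}u[s_2,s_2,\dots,s_i,s_i]|\le |D^{2(i-1)}u|_\infty\,\|\sigma\|_2^{2(i-1)}\prod_j|\bar W_j|^2$, so
\[
|\hat u^k|\le \frac{\|\sigma\|_2^{2(i-1)}}{2^{i-1}}\sup_{t\in[0,T]}|D^{2(i-1)}u(t,\cdot)|_\infty\prod_{j=2}^{i}|\bar W_j|^2.
\]

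For the Malliavin weights, Frobenius submultiplicativity $\|ABC\|_2\le\|A\|_2\|B\|_2\|C\|_2$ together with $\|\bar W\bar W^{\top}-\tau I_d\|_2\le|\bar W|^2+\sqrt d\,\tau$ gives $\|\W^{(k_1,\dots,k_j)}\|_2\le\|\sigma^{-1}\|_2^2(|\bar W_j|^2+\sqrt d\,\tau_j)/\tau_j^2$ with $\tau_j=T_{(k_1,\dots,k_j)}-T_{(k_1,\dots,k_{j-1})}$. Squaring, multiplying, and conditioning on the $\sigma$-algebra generated by all switching times, the $\bar W_j$'s become independent $\mathcal N(0,\tau_j I_d)$ vectors. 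Scaling $\bar W_j=\sqrt{\tau_j}Z_j$ with $Z_j\sim\mathcal N(0,I_d)$ turns the random part of each factor $|\bar W_j|^4(|\bar W_j|^2+\sqrt d\,\tau_j)^2/\tau_j^4$ into the $\tau$-free quantity $|Z_j|^4(|Z_j|^2+\sqrt d)^2$, whose expectation is a dimensional constant. The indicator $1_{T_k\in I}$ is measurable with respect to the switching times and factors out, so the $i-1$ contributions multiply, yielding the bound with $C(\sigma)$ collecting $\|\sigma\|_2^4$, $\|\sigma^{-1}\|_2^4$ and the dimensional constant.

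The main obstacle is the pointwise bound on $|\hat u^k|$: a naive Taylor expansion produces cross-terms in intermediate-order derivatives evaluated at shifted points that are not uniformly controlled by $|D^{2(i-1)}u|_\infty$ alone. Writing each second difference via the integral remainder avoids this and, more importantly, keeps the deterministic prefactor free of any power of $\tau_j$, which is precisely what makes the Gaussian moment computation in the last step scale-free and hence independent of the random switching times.
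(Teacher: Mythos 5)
Your proof is correct and follows essentially the same route as the paper's: recognize $\widehat{u}^k$ as an iterated symmetric second difference, bound it via Taylor expansion by $\sup_{t\in[0,T]}|D^{2(i-1)}u(t,\cdot)|_\infty$ times $\prod_j\|\sigma \bar W_j\|_2^2$, and use the fact that each product $\|\W^{\tilde k}\|_2^2\,\|\sigma\bar W_j\|_2^4$ is scale-free and independent of the switching times, so the indicator $1_{T_k\in I}$ factors out and each level contributes one factor $C(\sigma)$. The only difference is that you make explicit (through the factorization of the weights $(-2)^{\#\bar k}$ and the integral-remainder form of Taylor's formula) the induction step the paper compresses into ``similarly using some multidimensional Taylor expansions and the independence of the $\bar W^l$''.
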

\begin{proof}
For  $k \in Q_2$, using the mean value theorem
\begin{align*}
\E[  1_{T_{ k} \in I}  ||\W^{ k}||^2_2  (\widehat{u}^k)^2 ] =& 
\E[  1_{T_{ k} \in I}   ||\W^{ k}||^2_2  \frac{1}{4} (u(T_{k}, x + \mu T_{k} + \\ 
& \sigma  W^{k}_{T_k} ) +
u(T_{k}, x + \mu T_{k} +
 \sigma  W^{k^1}_{T_{k}} )  -2 u(T_{k}, x + \mu T_{k} +
 \sigma  W^{k^2}_{T_{k}} ))^2 ] \\
& \le d \: \E[ 1_{T_{\tilde k} \in I}   || \W ^k||^2_2  ||\sigma \bar W^{ k}_{\tau_{k}}||^4_2]  \sup_{t\in[0,T]} |  D^{2}u(t,.)|_\infty^2
\end{align*}
Then notice that $ || \W ^k||^2_2  ||\sigma \bar W^{k}_{\tau_{k}}||_2^4$ is independent of $\tau_{k}$ and $T_k$ such that 
\begin{align*}
E[ 1_{T_{\tilde k} \in I}   || \W ^k||^2_2  ||\sigma \bar W^{ k}_{\tau_{k}}||^4_2] ] =\E[ 1_{T_{\tilde k} \in I}] \E[  || \W ^k||^2_2  ||\sigma \bar W^{ k}_{\tau_{k}}||^4_2 ]
\end{align*}
and taking
$ C(\sigma)= d \: E[ || \W ^k||^2_2  ||\sigma \bar W^{k}_{\tau_{k}}||^4_2]$  we get the result.\\
Similarly using some multidimensional Taylor expansions, the independence of the $\bar W^l$  we get the result for $k \in Q_i$, $i>1$.
\qed
\end{proof}

We give the converging result in the linear case
\begin{proposition}
\label{theo1}
Under assumption \ref{ass::uProp}, supposing \eqref{flin} holds, there exists some functions of $u$: $C_1(u)$, $C_2(u)$, $C_3(u)$, and two functions $\hat C(T)$ and $C(\sigma)$ such that we have the following error given by the estimator \eqref{eq:estimFull1}:
\begin{align}
\E \big((\bar u_{\emptyset}^p - u(0,x))^2 \big) \le  
C_1(u) \hat C(T)^{2p} C(\sigma)^{p-1}  ||A||^{2p}_2 T^{2 \theta} \frac{\gamma(\alpha, \lambda T p)}{\Gamma(\alpha)} + \nonumber \\
\sum_{i=0}^{p-1} \frac{C_2(u)}{N_{i}} \hat C(T)^{2i+2}  C(\sigma)^{i} ||A||^{2i+2}_2\frac{\gamma(\alpha, \lambda T (i+1))}{\Gamma(\alpha)} + \nonumber \\
\sum_{i=0}^{p-1} \frac{C_3(u)}{N_i} \frac{\hat C(T)^{2i}  ||A||^{2i}_2 C(\sigma)^i}{\bar F(T)^2} 
\frac{\gamma(\alpha, \lambda T i)}{\Gamma(\alpha)}
\end{align}
\end{proposition}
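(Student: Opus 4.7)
The plan is to split the squared error as $\E[(\bar u^p_\emptyset - u(0,x))^2] \le 2(\E[\bar u^p_\emptyset]-u(0,x))^2 + 2\,\mathrm{Var}(\bar u^p_\emptyset)$ and bound bias and variance separately. The starting point is a Feynman-Kac-type identity: in the linear setting \eqref{flin}, conditioning on one gamma increment $\tau$ gives
\begin{align*}
u(t,x) = \E\bigl[\phi\bigl(t,\,(t+\tau)\wedge T,\,X_{(t+\tau)\wedge T},\,u,\,Du,\,A{:}D^2u\bigr)\bigr],
\end{align*}
while the Malliavin weights satisfy $\E[\V^{\tilde k} v(X^{\tilde k}_{T_{\tilde k}}) \mid \mathcal{F}_{T_k}]=Dv(X^k_{T_k})$ and an analogous identity producing $D^2 v$ from $\W^{\tilde k}$ applied to the antithetic triple $(X^{\tilde k},X^{\tilde k^1},X^{\tilde k^2})$. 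Iterating these identities shows that an infinite-depth scheme would be exactly unbiased, so the bias must come entirely from the depth-$p$ truncation.

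For the bias, the terminal rule $\bar u^p_k = g(X^k_{T_k})$ at $k\in Q^o_p$ equals $u(T,X^k_{T_k})$ rather than $u(T_k,X^k_{T_k})$, so the error has the form $u(T,\cdot)-u(T_k,\cdot)$ supported on $\{T_k<T\}$. I would propagate this terminal error up the tree through the $p-1$ intervening $\W$-weight identities and bound the resulting $2(p-1)$-fold derivative using the Hölder regularity \eqref{eq:hol} of $D^{2(p-1)}u$ in time, yielding a pointwise bound of order $\hat C(T)^p C(\sigma)^{(p-1)/2}\|A\|^p_2\, T^\theta$ on the event $\{T_{(k_1,\dots,k_p)}<T\}$. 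Since $T_{(k_1,\dots,k_p)}$ is the truncation at $T$ of a sum of $p$ i.i.d.\ $\rho$-distributed increments, $\mathbb{P}(T_{(k_1,\dots,k_p)}<T)=\gamma(\alpha,\lambda T p)/\Gamma(\alpha)$; squaring produces the first term of the bound.

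For the variance, I would use the tower property with respect to the conditional independence of the $N_i$ summands at each level, so that $\mathrm{Var}(\bar u^p_\emptyset)$ telescopes as $\sum_{i=0}^{p-1}(1/N_i)\,M_i$ with $M_i$ the second moment of a single level-$i$ summand. Splitting $\phi$ via \eqref{defPhi} into its $t<T$ part (carrying $1/\rho$ and the contraction $A{:}D^2u$, hence one extra factor $\|A\|^2_2$) and its $t\ge T$ part (carrying $1/\bar F(T)$ and $g$), and applying Lemma \ref{lemma1} with $I=[0,T]$ to the mixed second moments $\E\bigl[\prod_{\tilde k}\|\W^{\tilde k}\|_2^2 (\hat u^{\tilde k})^2 \1_{T_{\tilde k}\in I}\bigr]$, produces respectively the two remaining summation terms, with the factors $\gamma(\alpha,\lambda T(i+1))/\Gamma(\alpha)$ and $\gamma(\alpha,\lambda T i)/\Gamma(\alpha)$ arising as $\mathbb{P}(T_k<T)$ at the appropriate depth.

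The main obstacle is the combinatorial accounting: one must show that each nested level contributes exactly one factor $\|A\|_2$ (through $|A{:}D^2u|\le \|A\|_2|D^2u|$), one factor $C(\sigma)^{1/2}$ (through $\|\W\|_2$ against the antithetic second difference), and at most one Hölder factor in the overall bias — not one per level — because only the deepest terminal replacement introduces a time-Hölder gap, the preceding levels being handled exactly by the Malliavin identities. Lemma \ref{lemma1} is precisely designed for this bookkeeping, and once its output is combined with the elementary $\mathbb{P}(T_k<T)$ computation, the proposition follows by assembling the bias and variance pieces.
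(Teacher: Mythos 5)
Your overall architecture (bias squared plus variance) differs from the paper's, which never isolates the bias: it propagates the full conditional mean--square error $E_k$ level by level through the recursive inequality \eqref{eq:E0}--\eqref{eq:E0a} imported from \cite{warin2018nesting}, and only at the end invokes Lemma \ref{lemma1} together with the H\"older bound \eqref{eq:hol}. The bias half of your plan is workable in the linear case \eqref{flin}, since the estimator \eqref{eq:estimFull1} is affine in the leaf data, so the only systematic error is the replacement of $D^2u(T_k,\cdot)$ by $D^2g$ at depth $p$ on $\{T_k<T\}$. Note, however, that pushing this leaf error through the $p-1$ weights $\W$ consumes $2(p-1)$ derivatives of the error function $D^2u-D^2g$, so the H\"older regularity you need is that of $D^{2p}u$, not $D^{2(p-1)}u$, and you need an $L^1$ analogue of Lemma \ref{lemma1} (product of weights against the nested antithetic second differences) to justify the pointwise bound you assert before squaring.

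The genuine gap is in the variance half. You telescope $\mathrm{Var}(\bar u^p_\emptyset)$ into $\sum_i M_i/N_i$ and then bound the level-$i$ second moments by ``splitting $\phi$ via \eqref{defPhi} into its $t<T$ part carrying $A{:}D^2u$ and its $t\ge T$ part carrying $g$''. But the level-$i$ summands in \eqref{eq:estimFull1} contain the nested estimators $D^2\bar u^p_{\tilde k}$, not the true Hessian $D^2u(T_{\tilde k},X^{\tilde k}_{T_{\tilde k}})$: their second moments are not directly the quantities $\E\big[\prod\|\W\|_2^2\,\|\widehat{D^2u}\|_2^2\big]$ or $\E\big[\prod\|\W\|_2^2\,(\hat g)^2\big]$ that Lemma \ref{lemma1} controls. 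To substitute the true Hessian you must add and subtract it and then control the conditional mean--square error of the deeper estimator, which reintroduces exactly the recursion the paper runs: the $\frac{1}{N_i}(1+\frac{8}{N_i})$-weighted propagation of $E_{\tilde k}$, the discrete Cauchy--Schwarz step delivering one factor $\|A\|_2^2$ per level, and the tower-property iteration \eqref{eq:E0a} terminating in $E_{\tilde k^p}$. Without that recursive step (or an equivalent induction on the depth) the identification of your $M_i$ with the $C_2$ and $C_3$ terms of the proposition is unjustified, so the proposal as written does not yet prove the statement; once the recursion is supplied, your bias--variance split essentially collapses into the paper's argument.
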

\begin{proof}
The demonstration is in spirit similar to demonstration of propositions 2.3, 3.5 and 3.9
in \cite{warin2018nesting}. We only sketch the proof only highlighting the differences.\\
First notice that due  to assumption \ref{ass::uProp}, the solution $u$ of \eqref{eqPDEFull} satisfies  a Feynman-Kac relation  (see an adaptation of proposition 1.7 in \cite{touzi2012optimal} ) so that
for all $k \in Q_i$,  and $\forall \tilde k \in \tilde{Q}(k)$,
\begin{align*}
u(T_k, X^k_{T_k}) =& \E_{ T_{k}, X^k_{T_k}} \big[\phi\big(T_k , T_{\tilde k}, X_{T_{\tilde k}}^{\tilde{k}}, D^2u(T_{\tilde k},X_{T_{\tilde k}}^{\tilde{k}} ))\big],
%\label{eqq:realU}
\end{align*}
where 
\begin{flalign*}
  \phi(s, t,x,\theta) &:= \frac{\1_{\{t\ge T\}}}{ \Fb(T-s)} g(x)\!+\! \frac{\1_{\{t<T\}}}{\rho(t -s)} A:\theta.
 \end{flalign*}
Similarly  using automatic differentiation,
\begin{align*}
D^2u(T_k, X^k_{T_k}) =& \E_{ T_{k}, X^k_{T_k}} \big[ \W^{\tilde k} \phi\big(T_k , T_{\tilde k}, X_{T_{\tilde k}}^{\tilde{k}}, D^2u(T_{\tilde k},X_{T_{\tilde k}}^{\tilde{k}} ) \big)  \big],
\end{align*}
or 
\begin{align}
D^2u(T_k, X^k_{T_k}) =& \E_{ T_{k}, X^k_{T_k}} \big[ \W^{\tilde k} (\phi\big(T_k , T_{\tilde k}, X_{T_{\tilde k}}^{\tilde{k}}, D^2u(T_{\tilde k},X_{T_{\tilde k}}^{\tilde{k}} )\big)  - \phi\big(T_k , T_{\tilde k}, X_{T_{\tilde k}}^{\tilde{k}^2}, D^2u(T_{\tilde k},X_{T_{\tilde k}}^{\tilde{k}^2} ))\big) \big],
\label{eq:D2U}
\end{align}
where $\phi\big(T_k , T_{\tilde k}, X_{T_{\tilde k}}^{\tilde{k}^2}, D^2u(T_{\tilde k},X_{T_{\tilde k}}^{\tilde{k}^2} ))$ acts as a control variate.\\
Using the antithetic random variables:
\begin{align}
D^2u(T_k, X^k_{T_k}) =& \E_{ T_{k}, X^k_{T_k}} \big[ \W^{\tilde k} (\phi\big(T_k , T_{\tilde k}, X_{T_{\tilde k^1}}^{\tilde{k}}, D^2u(T_{\tilde k},X_{T_{\tilde k}}^{\tilde{k}^1} )\big)  - \phi\big(T_k , T_{\tilde k}, X_{T_{\tilde k}}^{\tilde{k}^2}, D^2u(T_{\tilde k},X_{T_{\tilde k}}^{\tilde{k}^2} ))\big) \big],
\label{eq:D2UA}
\end{align}
so that another representation is obtained  by adding
\eqref{eq:D2U} and \eqref{eq:D2UA}: 
\begin{align*}
D^2u(T_k, X^k_{T_k}) =& \frac{1}{2}\E_{ T_{k}, X^k_{T_k}} \big[ \W^{\tilde k} (\phi\big(T_k , T_{\tilde k}, X_{T_{\tilde k}}^{\tilde{k}}, u(T_{\tilde k},X_{T_{\tilde k}}^{\tilde{k}} )) + \phi\big(T_k , T_{\tilde k}, X_{T_{\tilde k}}^{\tilde{k}^1}, u(T_{\tilde k},X_{T_{\tilde k}}^{\tilde{k}^1} )) - \\
& 2  \phi\big(T_k , T_{\tilde k}, X_{T_{\tilde k}}^{\tilde{k}^2}, u(T_{\tilde k},X_{T_{\tilde k}}^{\tilde{k}^2} ))) \big].
%\label{eqq:realD2U2}
\end{align*}
Introduce  for $k \in  Q_i$, $0<i<p$:
\begin{flalign*}
%\label{eq:err}
E_k := & \E_{T_{k},X^{k}_{T_{k}} }\big(  || \frac{1}{2^{i-1}}\sum_{ \bar k \in R(k) } (D^2\bar u_{ \bar k}^p - D^2u(T_k,X^{\bar k}_{T_k})) (-2)^{\#\bar k} ||^2_2 1_{T_k <T} \big) \nonumber\\
\end{flalign*}
with the convention $E_\emptyset = \E[ (\bar u_{\emptyset}^p -u(0,x))^2  ]$.\\
Using the methodology used in \cite{warin2018nesting} (see proposition equation  (2.26) in this article):
\begin{align}
 \E\big((\bar u_{\emptyset}^p - u(0,x))^2 \big) \le& \frac{1}{N_0} (1+\frac{8}{N_0}) \sum_{\tilde k \in \tilde Q(\emptyset)} \E \big( \frac{1_{T_{\tilde k < T}}}{\rho(\tau_{\tilde k})^2}  (A : (D^2 \bar u_{\tilde k}^p - D^2u(T^{\tilde k},X^{\tilde k}_{T_{\tilde k}})))^2 \big) + \nonumber
\\
& 4 \frac{1}{N_0^2} \sum_{\tilde k \in \tilde Q(\emptyset)} \E \big(  1_{T_{\tilde k < T}} (\frac{A:D^2u(T_{\tilde k},X^{\tilde k}_{T_{\tilde k}})}{\rho(\tau_{\tilde k})} )^2\big) +  \nonumber \\
&2 \frac{1}{N_0^2} \sum_{\tilde k \in \tilde Q(\emptyset)} \E_{T_k,X^k_{T_k} }\big( 1_{T_{\tilde k\ge T}} \frac{g(X^{\tilde k}_T)^2}{ \Fb(T-T_k)^2} \big).
\label{eq:E0}
\end{align}
so that using  discrete Cauchy Schwartz and noting that
$ E_{\tilde k} =E_{T_{ \tilde k},X^{\tilde k}_{T_{\tilde k}} } \big(  || D^2\bar u_{ \tilde k}^p - D^2u(T_{\tilde k},X^{\tilde k}_{T_{\tilde k}})||^2_2 1_{T_{\tilde k} <T} \big)$
\begin{align*}
 \E\big((\bar u_{\emptyset}^p - u(0,x))^2 \big) \le& \frac{1}{N_0} (1+\frac{8}{N_0}) \sum_{\tilde k \in \tilde Q(\emptyset)} \E \big( \frac{||A||^2_2}{\rho(\tau_{\tilde k})^2} E_{\tilde k} \big) + \nonumber
\\
& 4 \frac{1}{N_0^2} \sum_{\tilde k \in \tilde Q(\emptyset)} \E \big(  1_{T_{\tilde k < T}} \frac{||A||_2^2}{\rho(\tau_{\tilde k})^2} ||D^2u(T_{\tilde k},X^{\tilde k}_{T_{\tilde k}}))||^2_2\big) +  \nonumber \\
&2 \frac{1}{N_0^2} \sum_{\tilde k \in \tilde Q(\emptyset)} \E_{T_k,X^k_{T_k} }\big( 1_{T_{\tilde k\ge T}} \frac{g(X^{\tilde k}_T)^2}{ \Fb(T-T_k)^2} \big).
%\label{eq:E0}
\end{align*}
In the same manner, for $k \in Q_i, i >0$, and using that $f$ is a linear operator:
\begin{align*}
E_k \le& \frac{1}{N_i} (1+\frac{8}{N_i}) \sum_{\tilde k \in \tilde Q(k)} E_{T_k,X^{k}_{T_{k}}} \big(  1_{T_{\tilde k < T}}  \frac{||\W^{\tilde k}||^2_2} {\rho(\tau_{\tilde k})^2} (A: \frac{1}{2^{i}} \big( \sum_{ \bar k \in R(\tilde k) } (D^2 \bar u_{\bar k}^p - D^2u(T^{\bar   k},X^{\bar k}_{T_{\tilde k}}))(-2)^{\#\bar k}) )^2 \big) + \nonumber \\
& 4 \frac{1}{N_i^2} \sum_{\tilde k \in \tilde Q(k)} \E_{T_k,X^k_{T_k} }\big(  1_{T_{\tilde k < T}} ||\W^{\tilde k}||^2_2   \frac{( A : \widehat{D^2u}^{\tilde k} )^2\big)}{\rho(\tau_{\tilde k})^2} +  \nonumber \\
&2 \frac{1}{N_i^2} \sum_{\tilde k \in \tilde Q(k)} \E_{T_k,X^k_{T_k} }\big( 1_{T_{\tilde k\ge T}}||\W^{\tilde k}||^2_2 \frac{ (\hat{g}^{\tilde k})^2}{ \Fb(T-T_k)^2} \big).
%\label{eq:ErrLessD}
\end{align*}

We deduce using discrete Cauchy Schwartz that
\begin{align*}
E_k \le& \frac{1}{N_i} (1+\frac{8}{N_i}) \sum_{\tilde k \in \tilde Q(k)} E_{T_k,X^{k}_{T_{k}}} \big(  \frac{||\W^{\tilde k}||^2_2} {\rho(\tau_{\tilde k})^2} ||A||^2_2 E_{\tilde k} \big) + \nonumber \\
& 4 \frac{1}{N_i^2} \sum_{\tilde k \in \tilde Q(k)} \E_{T_k,X^k_{T_k} }\big(  1_{T_{\tilde k < T}} \frac{||\W^{\tilde k}||^2_2}{\rho(\tau_{\tilde k})^2}  ||A||^2_2 \quad || \widehat{D^2u}^{\tilde k} ||^2_2\big) +  \nonumber \\
&2 \frac{1}{N_i^2} \sum_{\tilde k \in \tilde Q(k)} \E_{T_k,X^k_{T_k} }\big( 1_{T_{\tilde k\ge T}} ||\W^{\tilde k}||^2_2 \frac{(\hat{g}^{\tilde k})^2}{ \Fb(T-T_k)^2} \big).
%\label{eq:ErrLessD}
\end{align*}
We can iterate to get $E_\emptyset$ using the tower property

\begin{align}
\label{eq:E0a}
E_\emptyset \le & \prod_{i=1}^p \frac{1}{N_{i-1}} (1+\frac{8}{N_{i-1}}) \sum_{\tilde k^1 \in \tilde Q(\emptyset)} ... \sum_{\tilde k^p \in \tilde Q(\tilde{k}^{p-1})}  \E[ \prod_{i=2}^p \frac{ ||\W^{\tilde k^j}||_2}{\rho(\tau_{\tilde{k}^j})^2} \frac{||A||_2^{2p}}{\rho(\tau_{\tilde{k}^1})^2} E_{\tilde{k}^p}] + \nonumber \\
&  \sum_{i=0}^{p-1} \frac{1}{N_{i}^2} \prod_{j=1}^i \frac{1}{N_{j-1}} (1+\frac{8}{N_{j-1}})
\sum_{\tilde k^1 \in \tilde Q(\emptyset)} ... \sum_{\tilde k^{i+1} \in \tilde Q(\tilde{k}^{i})}  \E \big [ 1_{T_{\tilde{k}^{i+1} < T}}  \frac{||A||_2^{2i+2}}{\rho(\tau_{\tilde k^1})^2} \prod_{j=2}^{i+1} \frac{||\W^{\tilde k^j}||_2^2}{\rho(\tau_{\tilde k^j})^2}
4 || (\widehat{D^2u})^{\tilde k^{i+1}}||^2_2  +\nonumber \\
& 1_{T_{\tilde{k}^{i+1} > T}} 1_{T_{\tilde{k}^{i} < T}} \frac{2 ||A||_2^{2i} \displaystyle{\prod_{j=2}^{i+1}} ||\W^{\tilde k^j}||_2^2}{\displaystyle{\prod_{j=1}^{i}} \rho(\tau_{\tilde k^j})^2 \bar F(T-T_{\tilde{k}^{i}})^2}( (\widehat{g})^{\tilde k ^{i+1}})^2 \big ],
\end{align}
where $ E_{\tilde{k}^p} = 1_{T_{\tilde k^p < T}}  (\widehat{D^2g}(X_{T_{\tilde{k}^p}}^{\tilde{k}^p})- \widehat{D^2u}(T_{\tilde{k}^p},X_{T_{\tilde{k}^p}}^{\tilde{k}^p}))^2$.\\
Using Lemma \ref{lemma1} for function $D^2u-D^2g$ , and the fact that $\rho$ is bounded by below on $[0,T]$ by $\frac{1}{\hat C(T)} >0$:
we get that
\begin{align}
\E[ \prod_{i=2}^p \frac{ ||\W^{\tilde k^j}||_2}{\rho(\tau_{\tilde{k}^j})^2} \frac{1}{\rho(\tau_{\tilde{k}^1})^2} E_{\tilde{k}^p}] & \le  \sup_{t \in [0,T]} |D^{2p}u(t,.)-D^{2p}g|_\infty^2  \hat C(T)^{2p} C(\sigma)^{p-1}  \E( 1_{T_{\tilde k^p}} < T), \nonumber \\
& \le \tilde K^2 T^{2\theta} \hat C(T)^{2p} C(\sigma)^{p-1} \frac{\gamma(\alpha, \lambda T p)}{\Gamma(\alpha)}.
\label{eq1}
\end{align}
where we have used that $T_{\tilde k^p}$ follows a gamma law with parameters $(\alpha,p \lambda)$ and assumption \ref{ass::uProp}.\\
Similarly
\begin{align}
\E \big [ 1_{T_{\tilde{k}^{i+1} < T}}  \frac{1}{\rho(\tau_{\tilde k^1})^2} \prod_{j=2}^{i+1} \frac{||\W^{\tilde k^j}||_2^2}{\rho(\tau_{\tilde k^j})^2} || (\widehat{D^2u})^{\tilde k^{i+1}}||^2_2 ] \le \sup_{t \in [0,T]} |D^{2i+2}u(t,.)|_\infty^2 \hat C(T)^{2i+2}  C(\sigma)^{i} \frac{\gamma(\alpha, \lambda T (i+1))}{\Gamma(\alpha)},
\label{eq2}
\end{align}
and
\begin{align}
\E \big[ 1_{T_{\tilde{k}^{i+1}}\ge T} 1_{T_{\tilde{k}^{i}} < T}
\frac{\displaystyle{\prod_{j=2}^{i+1}} ||\W^{\tilde k^j}||_2^2}{ \displaystyle{\prod_{j=1}^{i}} \rho(\tau_{\tilde k^j})^2 \bar F(T-T_{\tilde{k}^{i}})^2}( (\widehat{g})^{\tilde k ^{i+1}})^2 \big ] &\le E \big[ 1_{T_{\tilde{k}^{i}} < T}\big] |D^{2i}g|_\infty^2 \frac{\hat C(T)^{2i} C(\sigma)^i}{\bar F(T)^2}, \nonumber\\
& \le |D^{2i}g|_\infty^2  \frac{\hat C(T)^{2i} C(\sigma)^i}{\bar F(T)^2} 
\frac{\gamma(\alpha, \lambda T i)}{\Gamma(\alpha)}.
\label{eq3}
\end{align}
Plugging equation \eqref{eq1}, \eqref{eq2}, \eqref{eq3} in \eqref{eq:E0a} gives the result.
\qed
\end{proof}
\begin{remark}
\label{remA}
 The case where $A$ depends on $t$ and $x$ is treated similarly. Instead of some bounds involving $\sup_{t\in [0,T]} |D^{2i}u(t,.)|_\infty$, we get some bounds
 involving $ sup_{t\in [0,T]} | \left( D^{2} A(t,.): \right)^{i-1} D^2 u(t,.)|_\infty$ such that
 it requires that $A(t,.)$ should have elements in $C^{2p}(\R^d)$.
 \end{remark}
This result gives us an algorithm to solve degenerated  Semi-Linear PDEs that cannot be solved with the algorithm given in \cite{warin2018nesting}.
Suppose that we want to solve:
\begin{flalign}
 \label{eqPDESemi}
  (-\partial_tu-\Lc u)(t,x)  & = f(t,x,u(t,x),Du(t,x)), \nonumber \\
  u(T,x)&=g(x), \quad  t<T,~x\in\R^d,
 \end{flalign}
 where now $\sigma$ is not invertible.
Then we introduce the operator
\begin{flalign}
 \hat \Lc u(t,x) := \mu Du(t,x) +  \frac{1}{2} \hat \sigma \hat \sigma^{\top} \!:\! D^2 u(t,x)
 \label{eq:genNonDeg}
 \end{flalign}
 such that $\hat \sigma$ is invertible. Then we can rewrite equation
 \eqref{eqPDESemi}  as:
 \begin{flalign}
 \label{eqPDESemiReform}
  (-\partial_tu-  \hat \Lc u)(t,x)  & = \tilde f(t, x,u(t,x),Du(t,x),D^2u(t,x))  \nonumber \\
  \tilde f(t, x,u(t,x),Du(t,x),D^2u(t,x)) := &f(t,x,u(t,x),Du(t,x)) - \frac{1}{2} (\hat \sigma \hat \sigma^{\top}- \sigma \sigma^{\top}) \!:\! D^2 u(t,x), \nonumber \\
  u(T,x)&=g(x), \quad  t<T,~x\in\R^d.
 \end{flalign}
 In order to have the converging result we have to take some assumptions  from \cite{warin2018nesting}:
 \begin{assump}
 \label{ass::lipFSemi}
 $f$ is  uniformly Lipschitz in $Du$ and $u$ with constant $K$ :
 \begin{flalign}
 | f(t, x, y,z) - f(t, x, \tilde y, \tilde z) | \le & K (|y-\tilde y| +  ||z-\tilde z||_2) \nonumber \\ &  \quad  \forall t \in [0,T], x \in \R^d, (y,\tilde y) \in \R \times \R, (z, \tilde z) \in \R^d \times \R^d.
 \end{flalign}
 \end{assump}
 \begin{assump}
 \label{ass::RegSemi}
  Equation  \eqref{eqPDESemi} has a solution $u \in C^{1,2p}([0,T] \x \R^d)$ with uniformly bounded derivatives in $x$ and $t$ and such that
  $D^{2p}u$ is $\theta$-H\"older with $\theta \in (0,1]$ in time following \eqref{eq:hol}
 \end{assump}
 Then using results in \cite{warin2018nesting} and proposition \ref{theo1}, we get the following proposition:
 \begin{proposition}
 Suppose that assumptions \ref{ass::lipFSemi} and \ref{ass::RegSemi} hold, then we have the following  error due to estimate \eqref{eq:estimFull1}  applied to equation \eqref{eqPDESemiReform} using a gamma Law with $0<\alpha<1$ for $\rho$ given by equation \eqref{rho}:
 \begin{align}
\label{est::finalSemi1}
 \E\big((\bar u_{\emptyset}^p - u(0,x))^2 \big) \le
 C_0(T,K,p)  + \sum_{i=1}^{p}  \frac{C_i(T,K)}{N_{i-1}}
\end{align}
where $C_0(T,K,p)$ goes to $0$ as $p$ goes to infinity, and $C_i$, $i>0$ are some functions depending on the maturity and the Lipschitz constant $K$ and going to $0$ as $i$ goes to infinity.
 \end{proposition}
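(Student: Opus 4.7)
The plan is to treat equation \eqref{eqPDESemiReform} as a full non-linear equation whose driver $\tilde f$ splits additively into a piece that is Lipschitz in $(u,Du)$ with constant $K$ (namely $f$ itself) and a piece that is linear in $D^2u$ with coefficient matrix $A:=-\frac{1}{2}(\hat\sigma\hat\sigma^\top-\sigma\sigma^\top)$. The Lipschitz-in-$(u,Du)$ contribution is then controlled by the machinery of \cite{warin2018nesting} (its Propositions 2.3, 3.5 and 3.9), while the linear-in-$D^2u$ contribution is controlled by Proposition \ref{theo1} above. Assumption \ref{ass::RegSemi} provides enough regularity for the Feynman--Kac representations of $u$, $Du$ and $D^2u$ at each switching date that underlie the unbiasedness of each single-level Monte Carlo step.

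First I would write the error decomposition at a node $k\in Q_i^o$ as in the proof of Proposition \ref{theo1}, but with $\tilde f$ in place of $A:D^2u$. A discrete Cauchy--Schwarz with three summands (one per dependence of $\tilde f$ on $u$, $Du$ and $D^2u$) produces a recursive inequality of the schematic form
$$E_k \le \frac{c}{N_i}\sum_{\tilde k\in\tilde Q(k)}\left(K^2\,\frac{E^{(0)}_{\tilde k}+E^{(1)}_{\tilde k}}{\rho(\tau_{\tilde k})^2}+||A||_2^2\,\frac{||\W^{\tilde k}||_2^2}{\rho(\tau_{\tilde k})^2}\,E^{(2)}_{\tilde k}\right)+(\text{source terms}),$$
where $E^{(0)}_{\tilde k}$, $E^{(1)}_{\tilde k}$ and $E^{(2)}_{\tilde k}$ denote the squared $L^2$ errors of $\bar u^p$, $D\bar u^p$ and $D^2\bar u^p$ at node $\tilde k$, carried down by the automatic-differentiation weights $1$, $\V^{\tilde k}$ and $\W^{\tilde k}$ respectively. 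Iterating $p$ times along the tree yields a sum over labelled branches, each contribution being a product of factors of the form $K^2/\rho^2$, $||\V||_2^2/\rho^2$ or $||A||_2^2\,||\W||_2^2/\rho^2$.

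Next I would collect terms. Since $\rho\ge 1/\hat C(T)$ on $[0,T]$ and the expectations of products of $||\W||_2^2$ (respectively $||\V||_2^2$) integrated against a smooth function along a branch are controlled by Lemma \ref{lemma1} (respectively its first-order counterpart from \cite{warin2018nesting}), each bracketed factor is bounded by $\hat C(T)^2 C(\sigma)$ times a probability. Along a branch of length $i$, the time $T_{\tilde k^i}$ is a sum of $i$ independent gamma increments, so $\E[1_{T_{\tilde k^i}<T}]=\gamma(\alpha,\lambda Ti)/\Gamma(\alpha)$, which decays super-exponentially in $i$. The truncation contributions at depth $p$ aggregate into $C_0(T,K,p)$, which combines the linear-in-$D^2u$ truncation from Proposition \ref{theo1} (containing $\gamma(\alpha,\lambda Tp)/\Gamma(\alpha)$) with the Lipschitz truncation from \cite{warin2018nesting}, and both pieces vanish as $p\to\infty$. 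The Monte Carlo variance contributions coming from level $i$ aggregate into $C_i(T,K)/N_{i-1}$, where $C_i$ contains $\gamma(\alpha,\lambda Ti)/\Gamma(\alpha)$ and hence tends to $0$ as $i\to\infty$.

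The main obstacle is the bookkeeping for the mixed Lipschitz/linear iteration: the three branches emanating from each node (for $u$, $Du$ and $D^2u$) carry different automatic-differentiation weights and different propagation constants ($K$ on the Lipschitz edges and $||A||_2$ on the linear edges), so one must keep track of which type of weight appears on each edge of a given branch before invoking Lemma \ref{lemma1} or its $\V$-analogue. Once this bookkeeping is set up, the various branch-wise contributions collapse into the stated additive form, and the asymptotic behaviour in $p$ and in $i$ follows from the super-exponential decay of $\gamma(\alpha,\lambda Ts)/\Gamma(\alpha)$ in $s$.
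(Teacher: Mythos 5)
Your proposal is correct and follows essentially the same route as the paper, whose own ``proof'' is just the observation that the bound follows by combining the error analysis of \cite{warin2018nesting} for the Lipschitz $(u,Du)$ part of $\tilde f$ with Proposition \ref{theo1} (via Lemma \ref{lemma1}) for the linear $D^2u$ part, the decay of $C_0$ and $C_i$ coming from the gamma-tail factors $\gamma(\alpha,\lambda T i)/\Gamma(\alpha)$ and Stirling's formula (Remark 4.4). Your sketch simply makes explicit the mixed $\V$/$\W$ bookkeeping that this combination implicitly requires, which is consistent with the paper's intent.
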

 \begin{remark}
 The fact that the $c_i$ goes to zeros can be seen using Stirling formula  as in \cite{warin2018nesting}.
 \end{remark}

\section{Numerical results for the semi linear equations in the degenerated case.}
In this section we give an example of semi-linear equations where the diffusion coefficient of the SDE is not strictly bounded by below by a  strictly positive value. \\
The problem to solve is 
 \begin{flalign}
 \label{eqPDECIR}
  (-\partial_tu-\Lc u)(t,x)  & = f(x,u(t,x), Du(t,x))), \nonumber \\
  u_T&=g,
 \end{flalign}
 where
 \begin{flalign*}
 \Lc u(t,x) := k(m- x) Du(t,x) +  \frac{1}{2} \bar\sigma(x)^2 \!:\! D^2 u(t,x),
 %\label{eq:gen}
 \end{flalign*}
 and $k = \hat k I_d$, $\hat k \in \R^{+}$, $m  = \hat m \1_d$, $\hat m  \in \R^{+}$, $\bar \sigma(x)$ is a diagonal matrix with $\bar \sigma_{i,i}(x) =   \hat \sigma  \sqrt{x_i}$, $\hat \sigma \in \R^{+}$.\\
 Then the SDE associated corresponds to a multidimensional CIR process  where all component have the same dynamic :
 \begin{align}
 \label{eq:cir}
 dS_t^i  = \hat k ( \hat m - S^i) dt + \hat \sigma   \sqrt{S_t^i} dW_t^i
 \end{align}
 and $W_t^i$ are independent Brownian motions and such that the Feller
 condition $2 \hat k \hat m > \hat{\sigma}^2$ is satisfied.\\
The CIR simulation is generally tricky and necessitates the derivation of special schemes (see for example \cite{kahl2006fast}).
In order to avoid this simulation and the degeneracy of the diffusion coefficients, we rewrite equation \eqref{eqPDECIR}
as
\begin{align}
\label{eqPDECIR2}
  (-\partial_tu- \tilde \Lc u)(t,x)  = &  \tilde f(x, u(t,x), Du(t,x), D^2u(t,x)) , \nonumber \\
  \tilde f(x, u(t,x), Du(t,x), D^2u(t,x)) = & \frac{1}{2} (\bar\sigma(x)^2 - \tilde \sigma^2) D^2u(t,x)+ f(x,u(t,x), Du(t,x))), \nonumber \\
  \tilde \Lc u(t,x) := &k(m-  x) Du(t,x) +  \frac{1}{2} \tilde \sigma^2 \!:\! D^2 u(t,x), \nonumber \\
  \tilde \sigma = & \bar \sigma I_d,  \quad \bar \sigma \in \R^{+}
\end{align}
so that the associated SDE corresponds to a multidimensional Ornstein Uhlenbeck process where all components satisfy the same equation
\begin{align}
\label{eq:OU}
dS_t^i  = \hat k ( \hat m - S^i) dt + \bar \sigma   dW_t^i.
\end{align}
We apply our scheme to equation \eqref{eqPDECIR2}  using estimator \eqref{eq:estimFull1}. Note that theoretically, the regularity of $ A= \frac{1}{2} (\bar\sigma(x)^2 - \tilde \sigma^2)$ is not sufficient enough according to remark \ref{remA} but we will see that numerically the algorithm gives good results.
\\
A small adaptation of the scheme has to be achieved to deal with the fact that the coefficients are not constant. \\
In fact the SDE \eqref{eq:OU} can be solved exactly between two dates $t$ and $t + \Delta t$ introducing $\hat S_t \in \R^d$ with $(\hat S_t)_i = S^i_t$ using :
\begin{align}
\hat S_{t+\Delta t} =  A \hat S_t  + B  + C  G,
\end{align}
where $G$ is a vector composed of independent unit centered Gaussian variables, $ A = e^{ -\hat k \Delta t}  I_d$, $B = \hat m (1-e^{ -\hat k \Delta t} ) \1_d$, $C = \bar \sigma \sqrt{\frac{1- e^{ -2 \hat k \Delta t}}{2 \hat k \Delta t}} I_d$.
Therefore, the estimator \eqref{eq:estimFull1} has to be adapted replacing in the Malliavin weight $\sigma \sqrt{\Delta t}$ by $ A^{-1} \sigma$.\\
In our examples, we take the final function: 
\begin{align*}
g(x) = \cos(\sum_{i=1}^d x_i),
\end{align*}
the driver is taken as:
\begin{align*}
f(x,y, z)=  & a  y \sum_{i=1}^d z_i + (-\alpha + \sum_{i=1}^d \frac{\hat \sigma^2}{2} x_i )\cos(\sum_{i=1}^d x_i) e^{-\alpha (T-t)} + \\
& \sum_{i=1}^d \hat k ( \hat m -x_i) \sin(\sum_{i=1}^d x_i) e^{-\alpha (T-t)} +  a d \cos(\sum_{i=1}^d x_i) \sin(\sum_{i=1}^d x_i) e^{-2\alpha (T-t)} 
\end{align*}
such that there exists a regular solution given by 
\begin{align*}
u(t,x) = \cos(\sum_{i=1}^d x_i) e^{-\alpha (T-t)}.
\end{align*}
In all the examples, we take $a=0.1$, $\alpha =0.2$, $T=1$, $\hat k= 0.1$, $\hat m = 0.3$, $\hat \sigma =0.5$.
We have to choose a value for $\bar \sigma$. It is more effective to try to diminish the importance of the linear term so we take $\bar \sigma = \hat \sigma \sqrt{\hat m }$.\\
In the whole section the number of particles taken at each level will be a sequence $(N_i^{ipart})_{i\ge 0}$ indexed by $ipart$ such that:
\begin{align}
\label{repPart}
N_i^{ipart}= N_i^0 \times 2^{ipart}.
\end{align}
We take $\rho$ as the density of an exponential law so that $\rho(x)=e^{\lambda x}$. Theoretically
we have to take a Gamma law with $\alpha<1$ to treat the non linearity in $f$, but the use of $\alpha=1$ corresponding to the exponential case is numerically the most effective as shown in \cite{warin2018nesting}.
\begin{figure}[h!]
 \begin{minipage}[b]{0.49\linewidth}
  \centering
 \includegraphics[width=\textwidth]{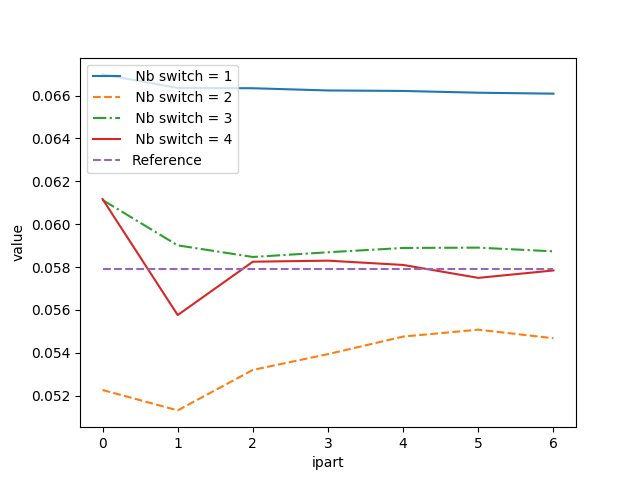}
 \caption*{$\lambda=0.1$.}
 \end{minipage}
\begin{minipage}[b]{0.49\linewidth}
  \centering
 \includegraphics[width=\textwidth]{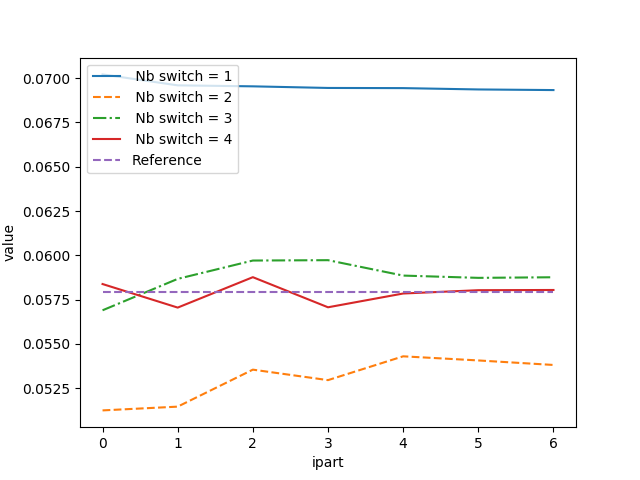}
 \caption*{$\lambda=0.15$.}
 \end{minipage}
 \caption{\label{fig:case1CIR5} CIR case dimension 5, $(N_0^0, N_1^0, N_2^0, N_3^0) = (1000,50,25,12)$}
\end{figure}
%\begin{figure}[h!]
 %\begin{minipage}[b]{0.49\linewidth}
 % \centering
 %\includegraphics[width=\textwidth]{}
 %\caption*{$\lambda=0.1$.}
 %\end{minipage}
%\begin{minipage}[b]{0.49\linewidth}
 % \centering
 %\includegraphics[width=\textwidth]{}
% \caption*{$\lambda=0.15$.}
 %\end{minipage}
 %\caption{\label{fig:case1CIR1} CIR case dimension 9, $(N_0^0, N_1^0, N_2^0, N_3^0) = (1000,50,25,12)$}
%\end{figure}
\begin{figure}[h!]
 \begin{minipage}[b]{0.49\linewidth}
  \centering
 \includegraphics[width=\textwidth]{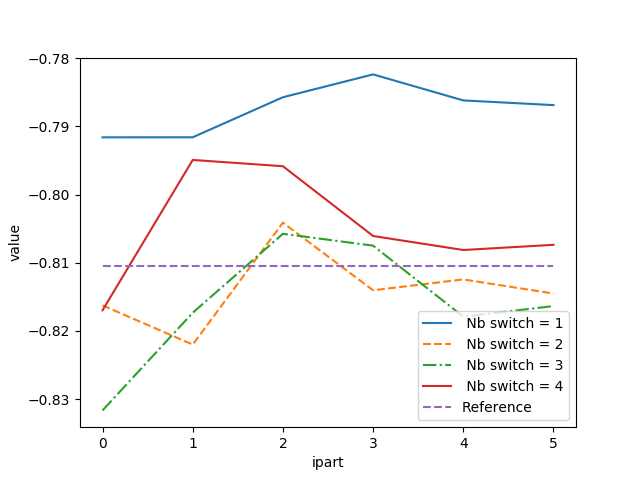}
 \caption*{$\lambda=0.1$.}
 \end{minipage}
\begin{minipage}[b]{0.49\linewidth}
  \centering
 \includegraphics[width=\textwidth]{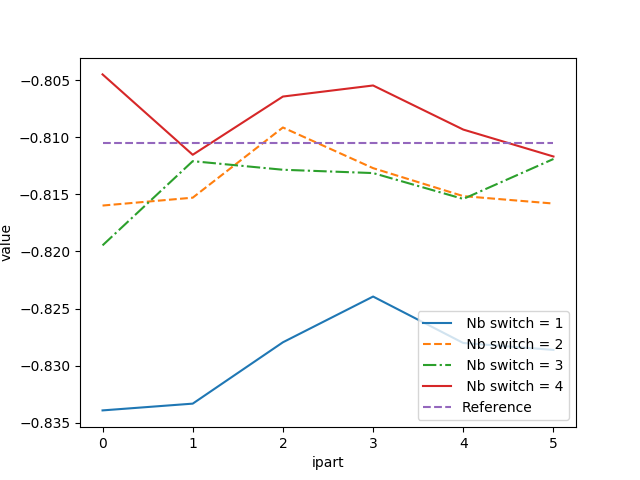}
 \caption*{$\lambda=0.15$.}
 \end{minipage}
 \caption{\label{fig:case1CIR10} CIR case dimension 10, $(N_0^0, N_1^0, N_2^0, N_3^0) = (1000,50,25,12)$}
\end{figure}
\begin{figure}[h!]
 \begin{minipage}[b]{0.49\linewidth}
  \centering
 \includegraphics[width=\textwidth]{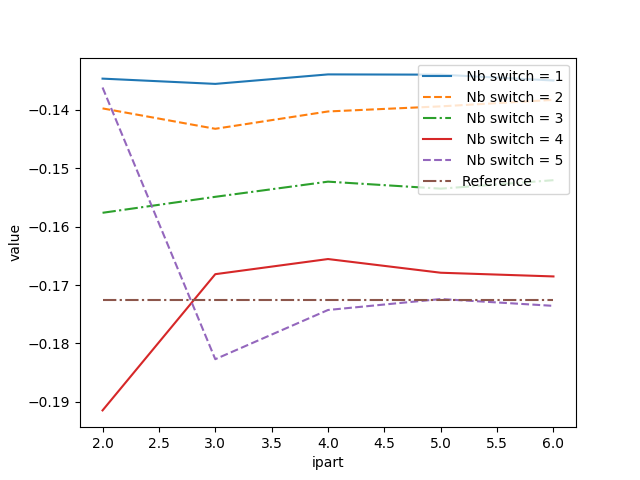}
 \caption*{$\lambda=0.05$.}
 \end{minipage}
\begin{minipage}[b]{0.49\linewidth}
  \centering
 \includegraphics[width=\textwidth]{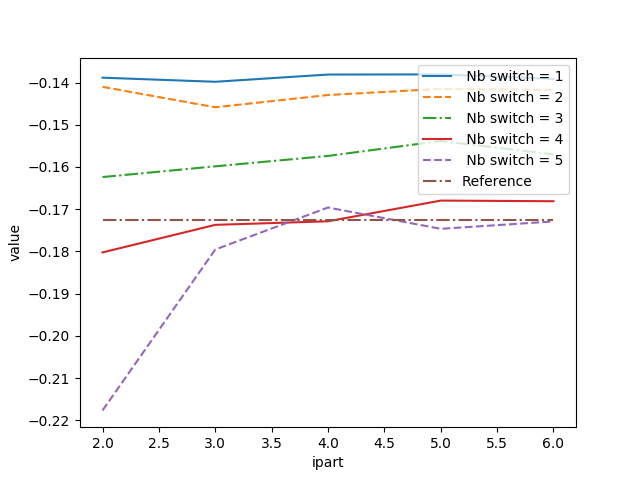}
 \caption*{$\lambda=0.075$.}
 \end{minipage}
 \caption{\label{fig:case1CIR15} CIR case dimension 15, $(N_0^0, N_1^0, N_2^0, N_3^0, N_4^0) = (1000,40,20,10,5)$}
\end{figure}
Results obtained are good but we have to take 4 switches to have a very good accuracy in dimension 5 and 10: we plot the results on figures \ref{fig:case1CIR5} and  \ref{fig:case1CIR10} taking  $\lambda = 0.1$ and $\lambda =0.15$.
In dimension $5$ a very accurate solution (with an error below $0.3\%$) is obtained taking at least $ipart$ equal to $4$ giving a computing time equal to $414$ seconds using $\lambda=0.15$ and $160$ seconds with $\lambda=0.1$. \\
In dimension $10$ the convergence is harder to reach and even if the results are good, the error seems to oscillate lightly. \\
On figure \ref{fig:case1CIR15} we give the results obtained in dimension 15: increasing the dimension, a number of $5$ switches is necessary and we take $\lambda=0.05$ and $\lambda=0.75$ to lower the computational time.
For example, for $5$ switches, $\lambda=0.075$ the error obtained is below $1\%$ for $ipart =5$ and $6$ for a computational time of $2800$ and $70000$ seconds. 
\newpage
\section{Numerical results for full non-linear equations}
As previously written, it was only proved that 
a driver linear in $D^2u$ was giving a converging method.
In this section we show numerically that the previous scheme can be used to solve some general HJB equations.
First we solve a toy problem  with a non linearity in $u D^2u$ in dimension 5 to 8.
At last we solve some problems of  continuous portfolio optimization.
\subsection{A first toy problem}
In this section we take the following parameters:
\begin{align*}
\mu= & \frac{\mu_0}{d} \un_d,\\
\sigma = & \frac{\sigma_0}{\sqrt{d}} \I_d,\\
f(t,x,y,z,\theta)=& \cos(\sum_{i=1}^d x_i) (\alpha +\frac{1}{2}\sigma_0^2)
e^{\alpha (T-t)}+ \sin(\sum_{i=1}^d x_i) \mu_0 e^{\alpha (T-t)} +
a \sqrt{d} \cos(\sum_{i=1}^d x_i)^2 e^{2\alpha (T-t)} \\
& + \frac{a}{\sqrt{d}} ( - e^{2\alpha (T-t)} ) \vee ( e^{2\alpha (T-t)} \wedge ( y \sum_{i=1}^d \theta_{i,i})),
\end{align*}
with $g(x)= \cos(\sum_{i=1}^d x_i)$, such that an explicit solution is given by
\begin{align*}
u(t,x)= e^{\alpha (T-t)} \cos( \sum_{i=1}^d x_i).
\end{align*}
We set $\mu_0=0.2$, $\sigma_0=1$, $\alpha=0.1$, $x_0=0.5 \un_d$, $T=1$.
\begin{figure}[h!]
 \begin{minipage}[b]{0.49\linewidth}
  \centering
 \includegraphics[width=\textwidth]{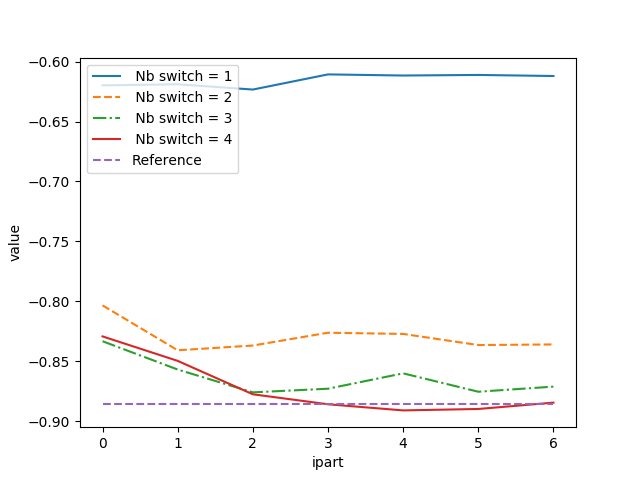}
 \caption*{$\lambda=0.1$.}
 \end{minipage}
\begin{minipage}[b]{0.49\linewidth}
  \centering
 \includegraphics[width=\textwidth]{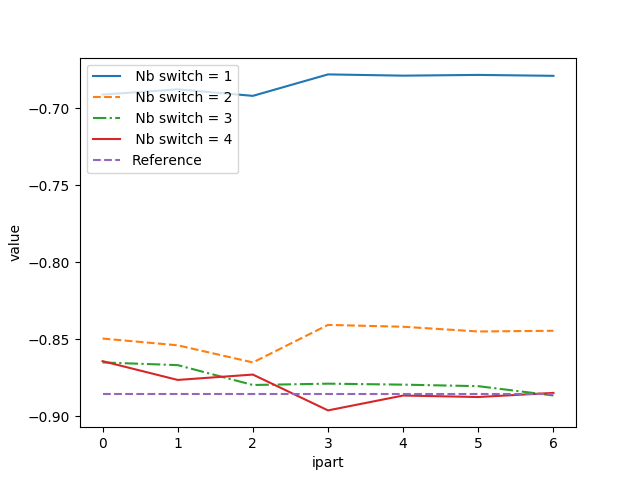}
 \caption*{$\lambda=0.2$.}
 \end{minipage}
 \caption{\label{fig:caseFullNL1} Full non linear toy example $a=0.1$, $d=5$.}
\end{figure}
\begin{figure}[h!]
 \begin{minipage}[b]{0.49\linewidth}
  \centering
 \includegraphics[width=\textwidth]{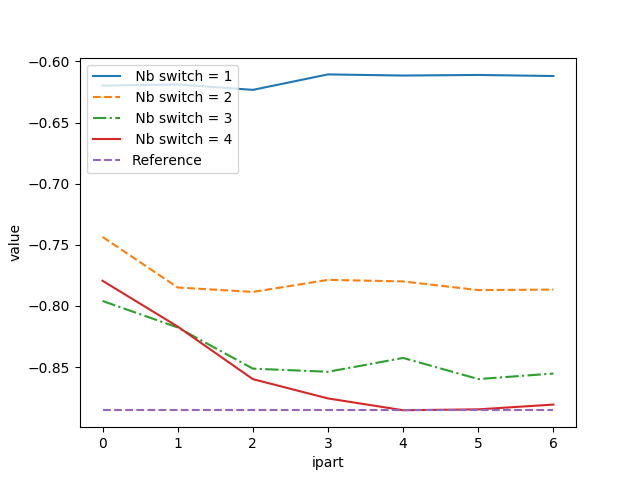}
 \caption*{$\lambda=0.1$.}
 \end{minipage}
\begin{minipage}[b]{0.49\linewidth}
  \centering
 \includegraphics[width=\textwidth]{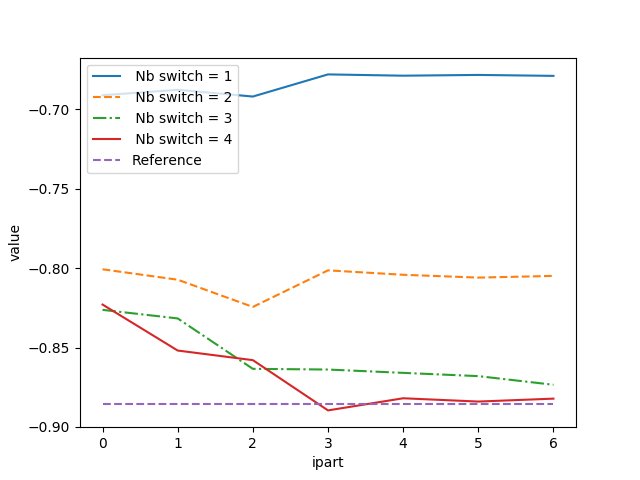}
 \caption*{$\lambda=0.2$.}
 \end{minipage}
 \caption{\label{fig:caseFullNL2} Full non linear toy example $a=0.2$, $d=5$.}
\end{figure}
\begin{figure}[h!]
 \begin{minipage}[b]{0.49\linewidth}
  \centering
 \includegraphics[width=\textwidth]{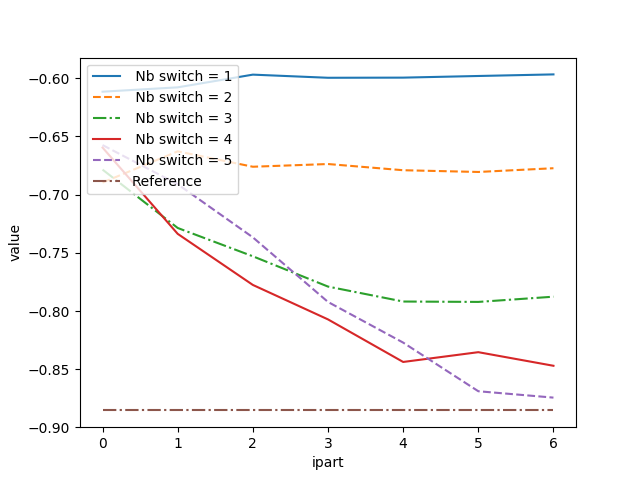}
 \caption*{$\lambda=0.1$.}
 \end{minipage}
\begin{minipage}[b]{0.49\linewidth}
  \centering
 \includegraphics[width=\textwidth]{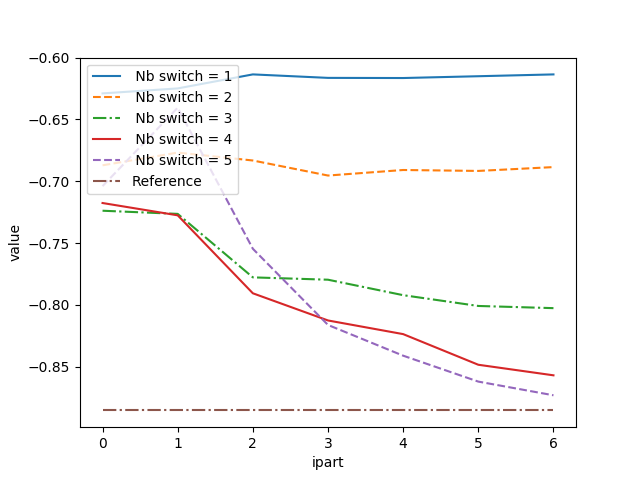}
 \caption*{$\lambda=0.1$.}
 \end{minipage}
 \caption{\label{fig:caseFullNL3} Full non linear toy example $a=0.4$, $d=5$.}
\end{figure}
All results are obtained using a number of particles given by \eqref{repPart}
with $(N_0,N_1,N_2,N_3,N_4)= (1000,40,40,20,20)$ .
On figures \ref{fig:caseFullNL1}, \ref{fig:caseFullNL2}, \ref{fig:caseFullNL3},  we plot  the result obtained  in dimension 5 for different values of $a$.
Clearly for $a=0.1$, $a=0.2$, the solution is reached with $4$ switches, while
it is not the case for $a=0.4$: 5 switches are necessary to get an accurate solution and on the graph the slope of the curve for a number of switches equal to $5$ for $ipart$ between $4$ and $6$ clearly indicates that a value $ipart=7$ should increase the accuracy.\\
On figure \ref{fig:caseFullTime}, we see the time explosion in dimension $5$ for $4$ switches as a function of $ipart$.
\begin{figure}[h!]
  \centering
 \includegraphics[width=0.5\textwidth]{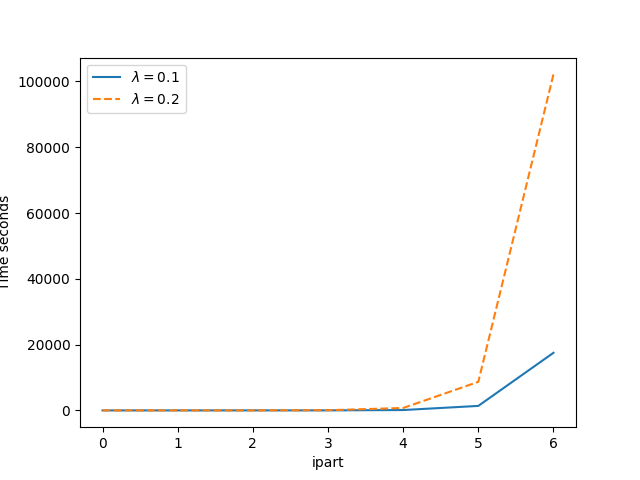}
 \caption{\label{fig:caseFullTime} Computational time  for $d=5$, $4$ switches.}
\end{figure}
At last on figure \ref{fig:caseFullNL4}, we plot the solution obtained in dimension $d=7$. The results are always very good but of course the error is higher than in dimension $5$.
\begin{figure}[h!]
 \begin{minipage}[b]{0.49\linewidth}
  \centering
 \includegraphics[width=\textwidth]{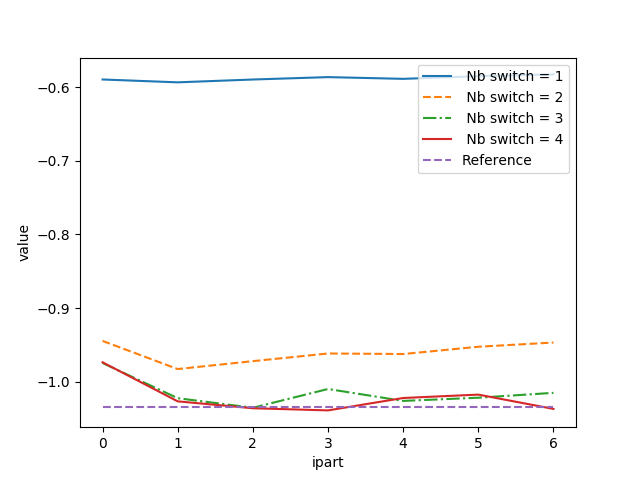}
 \caption*{$\lambda=0.1$.}
 \end{minipage}
\begin{minipage}[b]{0.49\linewidth}
  \centering
 \includegraphics[width=\textwidth]{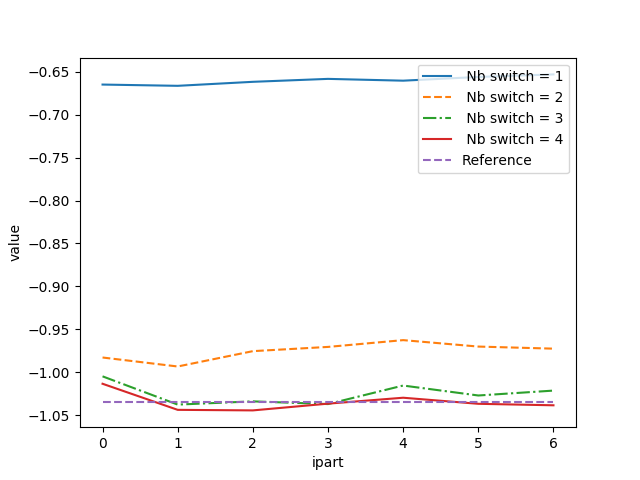}
 \caption*{$\lambda=0.2$.}
 \end{minipage}
 \caption{\label{fig:caseFullNL4} Full non linear toy example $a=0.1$, $d=7$.}
\end{figure}
\newpage
\subsection{Some HJB problems}
We solve the  problem of  continuous portfolio optimization in dimension two in a special case where we have semi-analytical solutions.
In this whole section we consider an investor who has access to  some non risky asset $S^0$  and $n$ risky assets.
The non-risky asset $S^0$  has a $0$ return so $dS^0_t=0$, $t\in[0,1]$. The dynamic of the $n$ risk assets is given by $\{S_t,t\in[0,T]\}$  an It\^o process.
The investor chooses an adapted process $\{\kappa_t,t\in[0,T]\}$ with values in $\R^n$, where $\kappa^i_t$ is the amount he decides to invest into asset $i$.\\
The portfolio dynamic is given by:
 \begin{align*}
 dX^\kappa_t
 =
 \kappa_t\cdot \frac{dS_t}{S_t} +(X^\kappa_t-\kappa_t\cdot \1)\frac{d S^0_t}{S^0_t}
 = 
 \kappa_t\cdot \frac{dS_t}{S_t}.
 \end{align*}
Let $\Ac$ be the collection of all adapted processes $\kappa$ with values in $\R^d$ and which are integrable with respect to $S$. Given an absolute risk aversion coefficient $\eta>0$, the portfolio optimization problem is defined by:
 \begin{align}
 \label{prob-portefeuille}
 v_0
 :=&
 \sup_{\kappa\in\Ac} \E\left[-\exp\left(-\eta X^\kappa_T\right)\right].
 \end{align}
\subsubsection{A first two dimensional problem}
We take this problem from \cite{fahim2011probabilistic}.
Let's take $n=1$ and assume that the security price process is defined by the Heston model \cite{heston1993closed}:
 \begin{align*}
 dS_t =& \mu S_tdt +  \sqrt{Y_t} S_t dW_t^{(1)}  
 \\
 dY_t =&  k (m-Y_t)dt + c\sqrt{Y_t} \left(\rho dW_t^{(1)}+\sqrt{1-\rho^2}dW_t^{(2)}\right),
 \end{align*}
where $W=(W^{(1)},W^{(2)})$ is a Brownian motion in $\R^2$.
As pointed out in \cite{fahim2011probabilistic}, the portfolio optimization problem \eqref{prob-portefeuille} does not depend on $S_t$. Given an initial state at the time origin $t$ given by $(X_t,Y_t)=(x,y)$, the value function $v(t,x,y)$ solves the HJB equation:
 \begin{equation}\label{hjb0}
 \begin{array}{rl}
 v(T,x,y) = - e^{-\eta x}
 ~\mbox{and}~
 0 
 =&
 - v_t -  k (m-y) v_y - \frac{1}{2}c^2 y v_{yy}
 -\sup_{\kappa\in\R} \left(\frac12\kappa^2yv_{xx}+\kappa(\mu v_x+\rho cy v_{xy})\right) 
 \\
 =&
 - v_t -  k (m-y) v_y - \frac{1}{2}c^2 y v_{yy}
 + \frac{(\mu v_x + \rho cy v_{xy})^2} 
        {2 y v_{xx}}.
 \end{array}
 \end{equation}
A quasi explicit solution of this problem was provided by Zariphopoulou \cite{zariphopoulou2001solution}:
 \begin{align}
 \label{zariphopoulou}
 v(t,x,y)=-e^{-\eta x} \left\| \exp\left(-\frac{1}{2}\int_t^T \frac{\mu^2}{\tilde Y_s}ds\right)
                       \right\|_{\L^{1-\rho^2}}
 \end{align}
 where the process $\tilde Y$ is defined by
 \begin{align*}
 \tilde Y_t=y \quad
 \mbox{  and} \quad &
 d\tilde Y_t = ( k (m-\tilde Y_t)- \mu c\rho)dt +  c\sqrt{\tilde Y_t} dW_t.
 \end{align*}
 Choosing $ \bar \sigma >0$, we can rewrite the problem as equation  \eqref{eqPDEFull} where
 \begin{align*}
 \mu = & ( 0, k (m-y))^{\top},  \qquad
 \sigma = \left(  \begin{array}{ll}
 \bar \sigma & 0 \\
 0 & c \sqrt{m}
 \end{array} \right), \qquad
 g(x) = - e^{-\eta x}
 \end{align*} 
 and 
 \begin{align}
 f(x,y,z,\theta)= &
 -\frac{1}{2} \bar{\sigma}^2 \theta_{11}  +\frac{1}{2} c^2 (y^2-m) \theta_{2,2} -\frac{(\mu z_1 + \rho c y \theta_{12})^2}{2 y \theta_{11}}.
 \end{align}
In order to have  $f$ Lipschitz, we truncate the control limiting the amount invested by taking
\begin{align*}
f_{M}(y,z,\theta) =
-\frac{1}{2} \bar \sigma^2 \theta_{11}  +\frac{1}{2} c^2 (y^2-m) \theta_{2,2} +
 \sup_{ 0 \le\eta\le M} \left(\frac{1}{2}\eta^2 y \theta_{11}+\eta (\mu z_1+\rho c y \theta_{12})\right).
\end{align*}
We take the following parameters :
$\mu =0.05$, $c  = 0.2$, $ k  = 0.1$, $m  = 0.3$, $Y_0 =  m$, $\rho =0$, $\eta=1$. The initial value of the portfolio is $x_0=1$, the maturity $T$ is taken equal to one year, giving a value function  $v_0=-0.3662$ computed from the quasi-explicit formula \eqref{zariphopoulou}.
On figure \ref{fig:case1Port1}, we give the results obtained by taking $\bar \sigma =0.1$ with one and two switches, which is enough to get a very accurate solution.
For $ipart=8$ and two switches we obtain $0.3661$ for both $\lambda =0.1$ and  $\lambda=0.15$.
\begin{figure}[h!]
 \begin{minipage}[b]{0.49\linewidth}
  \centering
 \includegraphics[width=\textwidth]{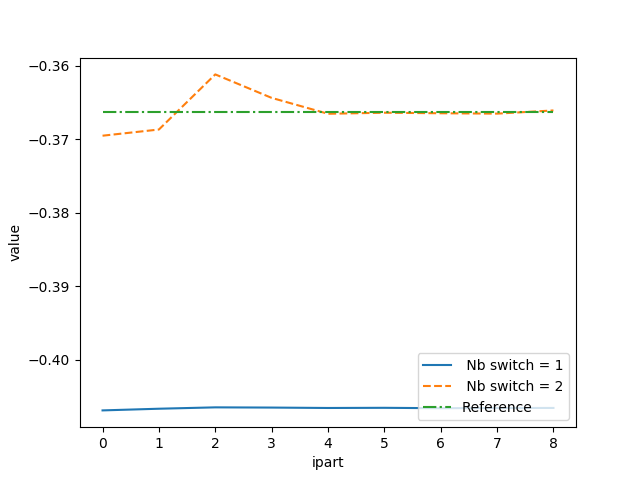}
 \caption*{$\lambda=0.1$.}
 \end{minipage}
\begin{minipage}[b]{0.49\linewidth}
  \centering
 \includegraphics[width=\textwidth]{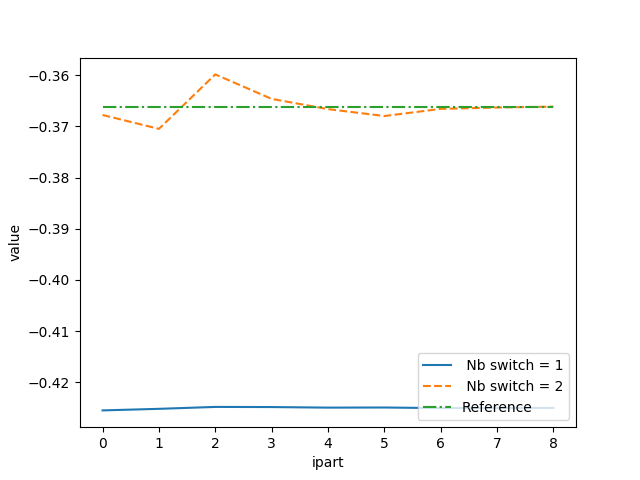}
 \caption*{$\lambda=0.15$.}
 \end{minipage}
 \caption{\label{fig:case1Port1} Portfolio optimization, $d=1$, $M=4$, $\bar \sigma =0.1$, $(N_0^0, N_1^0) = (1000,40)$}
\end{figure}
On figure \ref{fig:case1Port2}, we give the results obtained by taking $\bar \sigma =0.2$.
For $ipart=8$, we obtain $0.3654$ for $\lambda =0.1$ and $0.3658$ for $\lambda=0.15$ which is quite as not good as with $\bar \sigma =0.1$.
\begin{figure}[h!]
 \begin{minipage}[b]{0.49\linewidth}
  \centering
 \includegraphics[width=\textwidth]{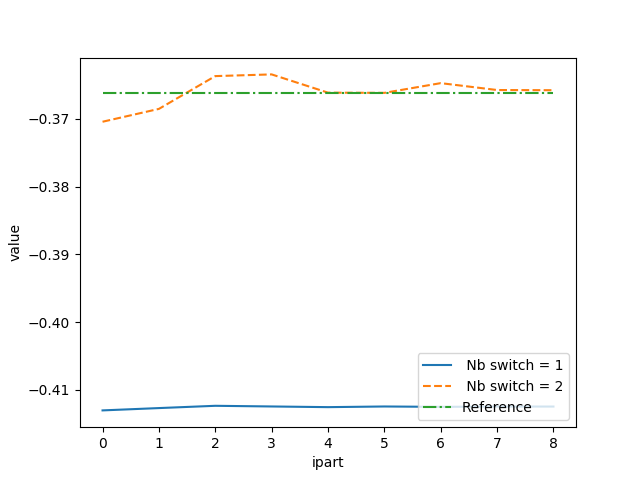}
 \caption*{$\lambda=0.1$.}
 \end{minipage}
\begin{minipage}[b]{0.49\linewidth}
  \centering
 \includegraphics[width=\textwidth]{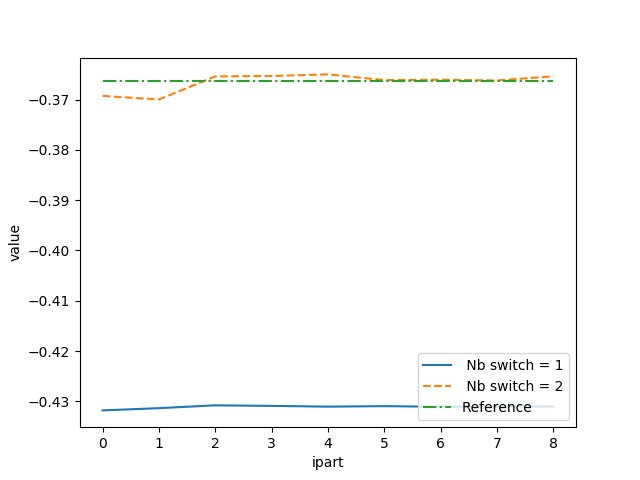}
 \caption*{$\lambda=0.15$.}
 \end{minipage}
 \caption{\label{fig:case1Port2} Portfolio optimization, $d=1$, $M=4$, $\bar \sigma =0.2$, $(N_0^0, N_1^0) = (1000,100)$}
\end{figure}
\subsubsection{In higher dimensions}
We assume that we dispose of $d$  securities all of them  being defined by a Heston model:
 \begin{align*} 
 dS_t^i =& \mu^i S_t^i dt +  \sqrt{Y^i_t} S^i_t dW_t^{(2i-1)}  \\
 dY_t^i =&  k^i (m^i-Y_t^i)dt + c^i \sqrt{Y^i_t} dW_t^{(2i)},
 \end{align*}
where $W=(W^{(1)}, ... , W^{(2d)})$ is a Brownian motion in $\R^{2d}$.
As in the two dimensional case, the problem doesn't depend on the $s^i$.
As in \cite{zariphopoulou2001solution}, we can guess that the solution can be expressed as
$$
v(t,x,y^1,..,y^d) = e^{-\eta x} u(t,y^1,..., y^d),
$$
and using Feyman Kac it is easy to see that then  a general solution
can be written
\begin{align}
 \label{zariphopoulouND}
 v(t,x,y^1,.., y^d)=-e^{-\eta x} \E[\prod_{i=1}^d \exp\left(-\frac{1}{2}\int_t^T \frac{(\mu^i)^2}{\tilde Y^i_s}ds \right) ]
 \end{align}
 with
 \begin{align*}
 \tilde Y_t^i=y^i \quad
 &\mbox{  and}&
 d\tilde Y_t^i = k^i (m^i-\tilde Y_t^i)dt +  c^i\sqrt{\tilde Y^i_t} dW^i_t,
 \end{align*}
 where $y^i$ corresponds to the initial value of the volatility at date $0$ for asset $i$.\\
 Choosing $ \bar \sigma >0$, we can write the problem as equation  \eqref{eqPDEFull} in dimension $d+1$  where
 \begin{align*}
 \mu = & ( 0, k^1 (m^1-y^1), ...,k^d (m^d-y^d) )^{\top},  \qquad
 \sigma = \left(  \begin{array}{lllll}
 \bar \sigma & 0 & ... & ...&  0 \\
 0 & c \sqrt{m^1} & 0 & ... & 0 \\
 0  & \dotsb &  \ddots & \dotsb  & 0 \\
 0  & \dotsb &  \dotsb & \ddots  & 0 \\
 0 & ... & ... & 0 & c \sqrt{m^d}
 \end{array} \right)
 \end{align*}
 always with the same terminal condition
 \begin{align*}
 g(x) = - e^{-\eta x}
 \end{align*} 
 and 
 \begin{align}
 f(x,y,z,\theta)= &
 -\frac{1}{2} \bar{\sigma}^2 \theta_{11}  +\frac{1}{2} \sum_{i=1}^d (c^i)^2 ((y^i)^2-m^i) \theta_{i+1,i+1} -  \sum_{i=1}^d  \frac{\mu^i z_1 }{2 y^i \theta_{11}}.
 \end{align}
Once again, in order to have  $f$ Lipschitz, we truncate the control limiting the amount invested by taking
\begin{align*}
f_{M}(y,z,\theta) = &
-\frac{1}{2} \bar \sigma^2 \theta_{11}  +\frac{1}{2} \sum_{i=1}^d (c^i)^2 ((y^i)^2-m^i) \theta_{2,2} + \\
&
 \sup_{ \begin{array}{c}
 \eta = (\eta^1,...,\eta^d) \\
 0 \le \eta^i\le M, i=1,d
 \end{array}}   \sum_{i=1}^d \left(\frac{1}{2}(\eta^i)^2 y^i \theta_{11}+(\eta^i) \mu^i z_1\right).
\end{align*}
 \\
 We suppose in our example that all assets have the same parameters that are equal to the parameters taken in the two dimensional case. We also suppose that the initial conditions are the same as before.\\
 Taking $\bar \sigma =0.2$, for $d=3$, $d=8$, $d=10$, we give the results obtained with one and two switches on figures \ref{fig:case1Portd3},\ref{fig:case1Portd8}, \ref{fig:case1Portd10}.
\begin{figure}[h!]
 \begin{minipage}[b]{0.49\linewidth}
  \centering
 \includegraphics[width=\textwidth]{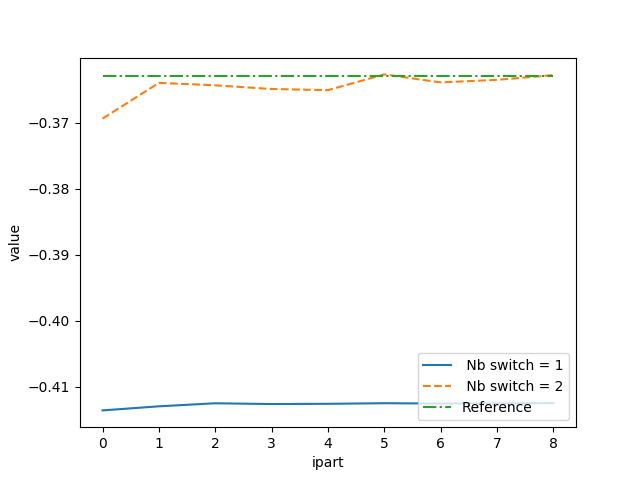}
 \caption*{$\lambda=0.1$.}
 \end{minipage}
\begin{minipage}[b]{0.49\linewidth}
  \centering
 \includegraphics[width=\textwidth]{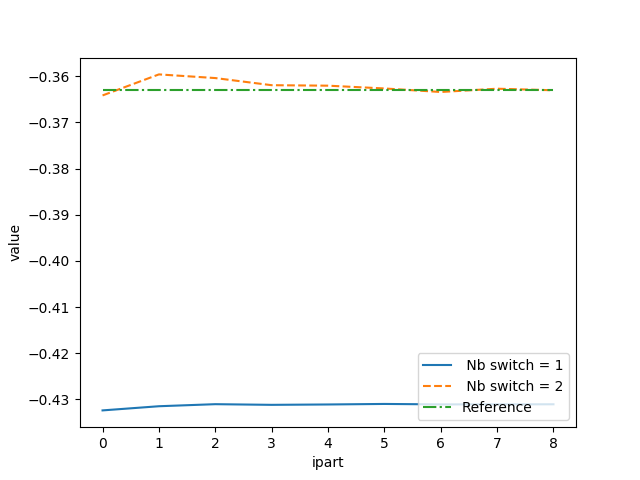}
 \caption*{$\lambda=0.15$.}
 \end{minipage}
 \caption{\label{fig:case1Portd3} Portfolio optimization, $d=3$, $M=4$, $\bar \sigma =0.2$, $(N_0^0, N_1^0) = (1000,100)$.}
\end{figure}
\begin{figure}[h!]
 \begin{minipage}[b]{0.49\linewidth}
  \centering
 \includegraphics[width=\textwidth]{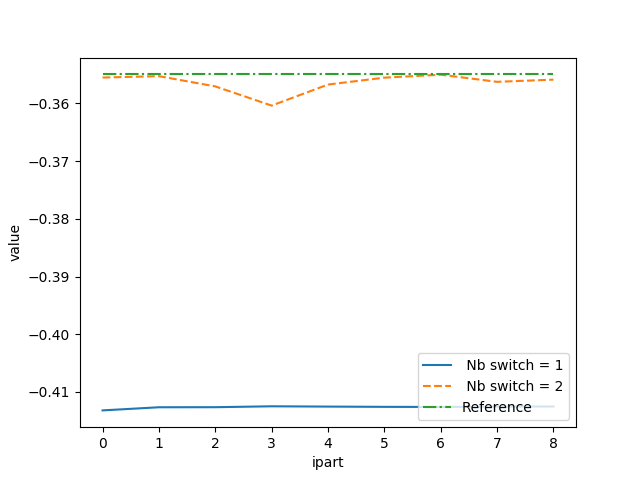}
 \caption*{$\lambda=0.1$.}
 \end{minipage}
\begin{minipage}[b]{0.49\linewidth}
  \centering
 \includegraphics[width=\textwidth]{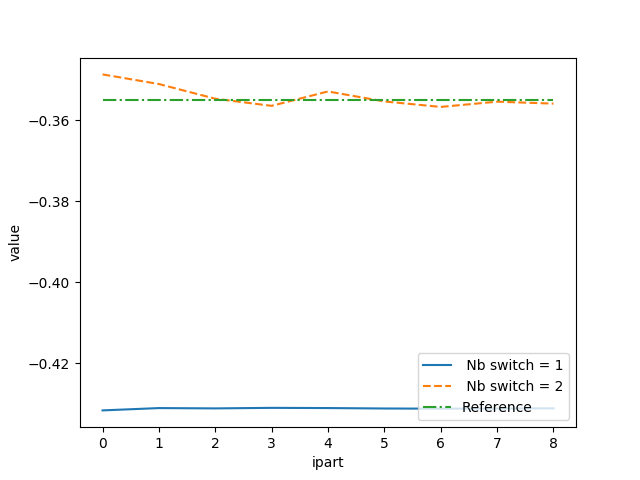}
 \caption*{$\lambda=0.15$.}
 \end{minipage}
 \caption{\label{fig:case1Portd8} Portfolio optimization, $d=8$, $M=4$, $\bar \sigma =0.2$, $(N_0^0, N_1^0) = (1000,100).$}
\end{figure}
\begin{figure}[h!]
 \begin{minipage}[b]{0.49\linewidth}
  \centering
 \includegraphics[width=\textwidth]{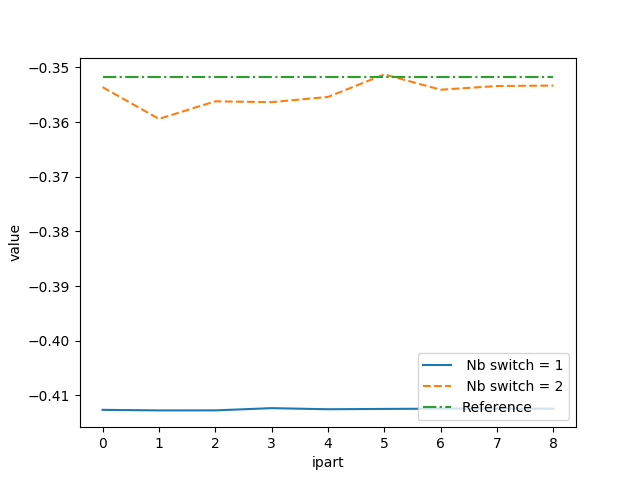}
 \caption*{$\lambda=0.1$.}
 \end{minipage}
\begin{minipage}[b]{0.49\linewidth}
  \centering
 \includegraphics[width=\textwidth]{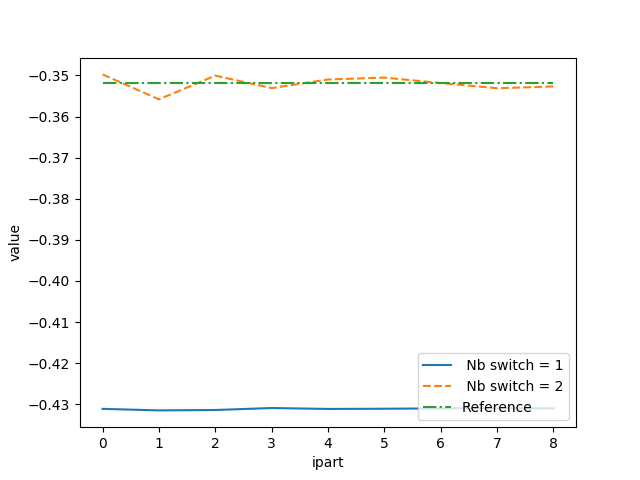}
 \caption*{$\lambda=0.15$.}
 \end{minipage}
 \caption{\label{fig:case1Portd10} Portfolio optimization, $d=10$, $M=4$, $\bar \sigma =0.2$, $(N_0^0, N_1^0) = (1000,100)$.}
\end{figure}
Results obtained are very accurate and the result are all obtained in less than $20$ seconds.

\newpage

\section{Conclusion}
An effective method to solve degenerated semi-linear equation in high dimension has been developed and is proved to be converging. Numerically it can be shown that it can be used to solve some full non linear problems.
The results are similar to the one in \cite{warin2018nesting}: the resolution time is linear with the dimension of the problem and to get accurate solutions in a reasonable computational time it is necessary to have  the Lipschitz constant of the problem and the maturity of the problem not too high.

\section{Ackowledgements}
This work has benefited from the financial support of the ANR Caesar and ANR program "Investissement d'avenir"

\bibliographystyle{spmpsci}
\bibliography{MyBib}

\end{document}